\numberwithin{equation}{section}
\newtheorem{theorem}{Theorem}[section]
\newtheorem{lemma}[theorem]{Lemma}
\newtheorem{proposition}[theorem]{Proposition}
\theoremstyle{definition}
\newtheorem{remark}[theorem]{Remark}
\newtheorem{definition}[theorem]{Definition}
\numberwithin{equation}{section}
\newcommand{\sC}{\mathcal{C}}
\newcommand{\sF}{\mathcal{F}}
\newcommand{\colim}{\operatorname{colim}}
\newcommand{\K}{\mathbb{K}}
\newcommand{\Z}{\mathbb{Z}}
\newcommand{\N}{\mathbb{N}}
\newcommand{\Li}{\mathbb{L}}
\renewcommand{\tilde}{\widetilde}
\newcommand{\ro}{{\rho}}
\newcommand{\tw}{\operatorname{tw}}
\newcommand{\bM}{\mathbf{M}}
\newcommand{\sW}{\mathcal{W}}
\newcommand{\Der}{\operatorname{Der}}
\newcommand{\Hom}{\operatorname{Hom}}
\newcommand{\Span}{\operatorname{Span}}
\newcommand{\Id}{\operatorname{Id}}
\title{An elementary approach to the model structure on DG-Lie algebras}
\author{Emma Lepri}
\address{\newline
	Universit\`a degli studi di Roma La Sapienza,\hfill\newline
	Dipartimento di Matematica  Guido
	Castelnuovo,\hfill\newline
	P.le Aldo Moro 5,
	I-00185 Roma, Italy.\medskip}
\email{emma.lepri@uniroma1.it}
\subjclass[2020]{18N40, 17B70, 	16T15}
\keywords{DG-Lie algebras, model categories}
\begin{document}
		
		\maketitle

\begin{abstract}
	This paper contains an elementary proof of the existence of the classical model structure on the category of unbounded DG-Lie algebras over a field of characteristic zero, with an emphasis on the properties of free and semifree extensions, which are particularly nice cofibrations. 
	The cobar construction of a locally conilpotent cocommutative coalgebra is shown to be an example of semifree DG-Lie algebra. 
		We also give an example of a non-cofibrant DG-Lie algebra whose underlying graded Lie algebra is free; this cannot occur in the bounded above case, where DG-Lie algebras of this form are always cofibrant.
\end{abstract}

\section{Introduction}
A model structure on the category of bounded above\footnote{To be precise, the category considered by Quillen is of DG-Lie algebras living in strictly positive degrees using the homological convention, however in this paper the cohomological convention is used.} DG-Lie algebras over the rational numbers was constructed by Quillen in \cite{QuiR}. Denoting by $\mathbf{DGLA}^{< 0}_{\mathbb{Q}}$ the category of DG-Lie algebras over $\mathbb{Q}$ living in degrees $ i < 0$, the weak equivalences in this model structure are quasi-isomorphisms, the fibrations are 
the maps surjective in degrees $ i < -1$, and the cofibrations are the maps with the left lifting property with respect to the trivial fibrations. This model structure was extended by Neisendorfer in \cite{Nei} to the category of DG-Lie algebras  living in degrees $i \leq 0$ over a field of characteristic zero.

In \cite{Hin}, Hinich constructed model structures on the categories of algebras over certain operads in chain complexes satisfying a {splitness} condition. In particular, he constructed a model structure on the category of unbounded DG-Lie algebras over a ring $k$ containing $\mathbb{Q}$. In this model structure the weak equivalences are again the quasi-isomorphisms, the fibrations are the surjective maps, and the cofibrations are the maps with the left lifting property with respect to the trivial fibrations. Over a field of characteristic zero, the existence of this model structure was also proved in \cite{DAGX} by Lurie, who furthermore showed that the model category is left proper and combinatorial. 

The purpose of this expository paper is to give a direct, elementary proof of the existence of the model structure on the category of unbounded DG-Lie algebras over a field of characteristic zero, and to give an explicit construction of a subclass of the cofibrations. The proof is based on the properties of free and semifree extensions of DG-Lie algebras.

In particular, we will prove that the cofibrations in the model structure are the retracts of semifree extensions of DG-Lie algebras, and that trivial cofibrations are the retracts of free extensions. 
In \cite{QuiR}, Quillen introduced a definition of free map of DG-Lie algebras, which does not involve the differentials, and proved that the cofibrations in the bounded above case are  exactly the retracts of the free maps. This implies that in the bounded above case every DG-Lie algebra whose  underlying graded Lie algebra is free is cofibrant. Quillen's definition of free maps does not coincide with our definition of semifree extensions; however in the bounded above case every free map is also a semifree extension. In the unbounded case this is no longer true: in general Quillen's free maps
are not semifree extensions according to our definition. In fact, we show that in the unbounded case DG-Lie algebras whose underlying graded Lie algebra is free are not always cofibrant.

\emph{Organisation of the paper:} In Section~\ref{sec.L}, the basics about free DG-Lie algebras are recalled. Section~\ref{sec.contract} contains some lemmas about the extension of a contraction of chain complexes to a contraction of tensor algebras and free DG-Lie algebras. In Section~\ref{sec.model} we recall the definition of model structure, and the fact that a left pre-model structure naturally gives rise to a model structure. The proof of the existence of the classical model structure on the the category of DG-Lie algebras is given in Section~\ref{sec.modeldgla}, using left pre-model structures. 
Finally, Section~\ref{sec.ex} is devoted to some examples: in particular we show that, unlike in the bounded case, in the unbounded case
DG-Lie algebras which are free as graded Lie algebras are not always cofibrant, and that
the cobar construction of a locally conilpotent cocommutative coalgebra is a semifree DG-Lie algebra. 
\subsection*{Notation and setup}

Throughout the paper, $\K$ will denote a fixed field of characteristic 0.
Every (graded) vector space is assumed over $\K$ and the symbol $\otimes$ denotes the tensor product over $\K$, unless otherwise specified.
If $V=\oplus_{n\in\Z}V^n$ is a graded vector space, $\bar{a}$ denotes the degree of a non-zero homogeneous element $a$: in other words $\bar{a}=n$ whenever $a\not=0$ and $a\in V^n$. 

In a complex of vector spaces, the differential will always have degree $+1$.
As usual, for every complex of vector spaces $V$, we shall denote by $Z^n(V), B^n(V)$ and $H^n(V)$ the space of $n$-cocycles, the space of $n$-coboundaries and the $n$th cohomology group, respectively.
The vector space generated by $v$ will be denoted by $\K\langle v \rangle$.

\section{Free DG-Lie algebras}\label{sec.L}

In this section we recall the construction of the free DG-Lie algebra generated by a complex of vector spaces over a field of characteristic zero. This construction is similar to the free Lie algebra generated by a non-graded vector space over a field of characteristic zero; the only additional thing to prove is that the differential of the complex of vector spaces induces one on the free graded Lie algebra. The references for this section are \cite[Chapter 21]{FHT} and \cite[Appendix B]{QuiR}.

Recall that the free associative DG-algebra generated by a complex of vector spaces $(V,d)$ is the tensor algebra $T(V)$. As a graded vector space it is given by
$T(V)= \bigoplus_{n \geq 0} V^{\otimes  n}$,
with product defined as the bilinear extension of \[(v_1 \otimes \cdots \otimes v_a)(w_1 \otimes \cdots \otimes w_b ) = v_1 \otimes \cdots \otimes  v_a \otimes w_1 \otimes \cdots \otimes w_b,\]
and differential 
\[ d(v_1\otimes \cdots \otimes v_s) = \sum^{s}_{i=1} (-1)^{\overline{v_1}+ \cdots \overline{v_{i-1}}} v_1 \otimes \cdots \otimes dv_i \otimes \cdots \otimes v_s.\]

An associative DG-algebra $A$ can be considered as a DG-Lie algebra, with bracket induced by the graded commutator $[a,b]= ab - (-1)^{\overline{a}\overline{b}}ba$. To avoid confusion we denote by $A_L$ the DG-algebra $A$ seen as a DG-Lie algebra. In particular, given a graded vector space $V$, we denote by $T(V)_L$ the graded Lie algebra with bracket defined as the bilinear extension of $[v,w]= v\otimes w - (-1)^{\overline{v} \ \overline{w}}w\otimes v$.

The free graded Lie algebra generated by a graded vector space $V$ is then the
smallest graded Lie subalgebra $\Li(V) \subseteq T(V)_L$ containing $V$, or equivalently, the intersection of all the graded Lie subalgebras of $T(V)_L$ containing $V$.
We need to prove that if $(V,d)$ is a complex of vector spaces, $\Li(V)$ is a DG-Lie algebra: this can be done by checking that the
differential on $T(V)_L$ restricts to a differential on $\Li(V)$. We will use the following results:

\begin{lemma}Let $\Li(V )_n \subseteq V^{\otimes n}$ be the graded subspace generated
	by all the elements \[[v_1,[v_2,[\dots, [v_{n-1},v_n]\dots]]],\] with $v_1,\dots , v_n \in V$.
	Then $\Li(V)= \bigoplus\limits_{n \geq 1} \Li(V)_n$.
\end{lemma}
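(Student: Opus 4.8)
The plan is to set $W := \bigoplus_{n \geq 1} \Li(V)_n$ and prove $W = \Li(V)$ by a double inclusion, with essentially all the content concentrated in showing that $W$ is closed under the bracket. First I would observe that the sum defining $W$ is automatically direct: since $\Li(V)_n \subseteq V^{\otimes n}$ and $T(V) = \bigoplus_{n \geq 0} V^{\otimes n}$ as a graded vector space with respect to the tensor length, the subspaces $\Li(V)_n$ lie in pairwise distinct homogeneous components of $T(V)$, so no nontrivial relation between elements of different lengths can occur.

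The inclusion $W \subseteq \Li(V)$ is immediate: every generator $[v_1, [v_2, [\dots, [v_{n-1}, v_n] \dots]]]$ of $\Li(V)_n$ is an iterated bracket of elements of $V$, and $\Li(V)$ is by definition a graded Lie subalgebra of $T(V)_L$ containing $V$, hence contains all such elements. For the reverse inclusion I would exploit the minimality of $\Li(V)$ among graded Lie subalgebras containing $V$: it suffices to check that $W$ is itself such a subalgebra. Since $\Li(V)_1 = V$ we have $V \subseteq W$, so the only nontrivial point is that $W$ is closed under the bracket. By bilinearity this reduces to the claim that $[\Li(V)_m, \Li(V)_n] \subseteq \Li(V)_{m+n}$ for all $m, n \geq 1$, which I would prove by induction on $m$. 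The base case $m = 1$ is immediate, because bracketing a generator $w$ of $\Li(V)_n$ on the left by $v \in V$ produces exactly a left-normed bracket of length $n+1$.

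The inductive step is the heart of the argument, and the place where the graded Jacobi identity enters. Writing a generator of $\Li(V)_m$ as $[v_1, u]$ with $v_1 \in V$ and $u \in \Li(V)_{m-1}$, for $w \in \Li(V)_n$ the graded Jacobi identity gives
\[[[v_1, u], w] = [v_1, [u, w]] - (-1)^{\overline{v_1}\,\overline{u}} [u, [v_1, w]].\]
By the inductive hypothesis $[u, w] \in \Li(V)_{m-1+n}$, so the first term lies in $[V, \Li(V)_{m-1+n}] \subseteq \Li(V)_{m+n}$ by the base case; and $[v_1, w] \in \Li(V)_{n+1}$ by the base case, so the second term lies in $[\Li(V)_{m-1}, \Li(V)_{n+1}] \subseteq \Li(V)_{m+n}$ again by the inductive hypothesis. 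Hence $[[v_1,u],w] \in \Li(V)_{m+n}$, and extending bilinearly over generators closes the induction.

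The step I expect to be the main obstacle is precisely this Jacobi manipulation: one must keep the Koszul signs consistent and set up the double bookkeeping (induction on $m$, with $n$ ranging freely) so that both summands genuinely fall back on smaller cases. Once the sign conventions are fixed as above, both terms land in $\Li(V)_{m+n}$, so $W$ is a graded Lie subalgebra containing $V$, whence $\Li(V) \subseteq W$ by minimality; combined with the first inclusion this yields $\Li(V) = W = \bigoplus_{n \geq 1} \Li(V)_n$.
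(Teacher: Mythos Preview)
Your argument is correct. The double-inclusion strategy, with the only substantive step being closure of $W$ under the bracket via induction on $m$ and the graded Jacobi identity, is the standard elementary route and is carried out cleanly; the sign in your Jacobi rewrite matches the convention $[[x,y],z]=[x,[y,z]]-(-1)^{\overline{x}\,\overline{y}}[y,[x,z]]$, and both summands genuinely fall back to the case $m-1$.

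As for comparison: the paper does not give its own proof of this lemma at all, but instead groups it with the Dynkin--Specht--Wever theorem and the Dynkin projector and refers the reader to \cite[Appendix~B]{QuiR} and \cite[V.4]{Jac}. In those treatments the decomposition is typically obtained as a consequence of the Dynkin projector $\rho$ (Theorem~\ref{cor.dynkin}): since $\rho$ maps $V^{\otimes n}$ into $\Li(V)_n$ and is the identity on $\Li(V)$, one reads off $\Li(V)=\bigoplus_n \Li(V)_n$ directly. Your approach is more self-contained in that it avoids the projector entirely and uses only the minimality of $\Li(V)$ and the Jacobi identity; the projector-based route, on the other hand, gives the lemma essentially for free once Theorem~\ref{cor.dynkin} is in hand and packages the three statements together.
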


\begin{theorem}[Dynkin, Specht, Wever]\label{Thm.DSW}
	Let $V$ be a graded vector space and $H$ a graded Lie algebra over a field $\K$ of characteristic $0$. Let $f\colon V \to H$ be a map of graded vector spaces, and 
	define $F\colon T(V) \to H$ as follows: 
	\[ F(1)=0, \quad F(v)=f(v),\]
	\[F(v_1 \otimes \dots \otimes v_n)= \frac{1}{n}[f(v_1), [f(v_2), \dots [f(v_{n-1}), f(v_n)] \dots ]].\]
	Then the restriction $F\colon \Li(V) \to H$ is the unique morphism of graded Lie algebras extending $f$. 
\end{theorem}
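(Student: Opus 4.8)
The uniqueness is immediate from the preceding Lemma: since $\Li(V)$ is spanned by the iterated brackets $[v_1,[\dots,[v_{n-1},v_n]\dots]]$, any morphism of graded Lie algebras $g\colon\Li(V)\to H$ with $g|_V=f$ is forced to send such a bracket to $[f(v_1),[\dots,[f(v_{n-1}),f(v_n)]\dots]]$, and hence is determined. As $F(v)=f(v)$, the only thing left to prove is that the restriction of $F$ to $\Li(V)$ is a morphism of graded Lie algebras, that is, $F([a,b])=[F(a),F(b)]$ for all homogeneous $a,b\in\Li(V)$.

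The plan is to isolate a single bracketing operator together with an equivariance identity for it. Writing $\overline{T}(V)=\bigoplus_{q\ge 1}V^{\otimes q}$, define $\delta_f\colon T(V)\to H$ on $V^{\otimes n}$ by $\delta_f(v_1\otimes\cdots\otimes v_n)=[f(v_1),[\dots,[f(v_{n-1}),f(v_n)]\dots]]$ and by $0$ on $\K$, so that $F=\tfrac1n\delta_f$ on $V^{\otimes n}$ and, directly from the definition, $\delta_f(v\otimes x)=[f(v),\delta_f(x)]$ for every $v\in V$ and every $x\in\overline{T}(V)$. The heart of the matter is the identity
\begin{equation*}
\delta_f(c\otimes x)=[F(c),\delta_f(x)]\qquad\text{for all homogeneous }c\in\Li(V)\text{ and all }x\in\overline{T}(V).\tag{$\ast$}
\end{equation*}

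I would prove $(\ast)$ by induction on the tensor length $p$ of $c$. For $p=1$ one has $c\in V$ and $(\ast)$ is exactly the defining recursion for $\delta_f$. For $p\ge 2$, the preceding Lemma allows me to assume, by linearity, that $c=[u,c']$ with $u\in V$ and $c'\in\Li(V)_{p-1}$. Expanding $c\otimes x=u\otimes c'\otimes x-(-1)^{\bar u\,\bar{c'}}\,c'\otimes u\otimes x$, then applying the defining recursion and the inductive hypothesis (to $c'$ paired with $x$ and with $u\otimes x$) to each summand, one obtains $\delta_f(c\otimes x)=[f(u),[F(c'),\delta_f(x)]]-(-1)^{\bar u\,\bar{c'}}[F(c'),[f(u),\delta_f(x)]]$, which the graded Jacobi identity in $H$ collapses to $\bigl[\,[f(u),F(c')],\,\delta_f(x)\,\bigr]$. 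It then remains to recognise the inner bracket as $F(c)$: a short computation with the inductive hypothesis applied to $c'$ and $x=u$ identifies both $[f(u),F(c')]$ and $F([u,c'])$ with $\tfrac1{p-1}\delta_f(u\otimes c')$, reconciling the coefficients $\tfrac1{p-1}$ and $\tfrac1p$ coming from the two definitions. This sign-and-coefficient bookkeeping inside the inductive step is the only delicate part of the argument, and is also where characteristic zero is used, through the divisions by the tensor lengths.

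Granting $(\ast)$, the homomorphism property is a two-line computation. Since $\delta_f=nF$ on $V^{\otimes n}$, in particular $\delta_f(b)=qF(b)$ for $b\in\Li(V)_q$, so for $a\in\Li(V)_p$ and $b\in\Li(V)_q$ the identity $(\ast)$ gives $\delta_f(a\otimes b)=[F(a),\delta_f(b)]=q[F(a),F(b)]$ and likewise $\delta_f(b\otimes a)=p[F(b),F(a)]$. As $[a,b]=a\otimes b-(-1)^{\bar a\,\bar b}b\otimes a\in\Li(V)_{p+q}$, this yields
\[
F([a,b])=\frac{1}{p+q}\bigl(\delta_f(a\otimes b)-(-1)^{\bar a\,\bar b}\delta_f(b\otimes a)\bigr)=\frac{q+p}{p+q}\,[F(a),F(b)]=[F(a),F(b)],
\]
where graded antisymmetry in $H$ is used to rewrite $[F(b),F(a)]$. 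Together with $F|_V=f$ and the uniqueness noted at the start, this proves the statement. (As a byproduct, taking $H=\Li(V)$ with $f$ the inclusion shows that $\tfrac1n$ times the bracketing map is the identity on $\Li(V)_n$, recovering the classical Dynkin–Specht–Wever idempotent property.)
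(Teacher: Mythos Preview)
The paper does not supply its own proof of this theorem; immediately after stating it and the Dynkin projector, it writes ``For the proofs of the above facts we refer either to \cite[Appendix B]{QuiR} or to \cite[V.4]{Jac}.'' So there is nothing in the paper to compare your approach against.

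That said, your argument is correct and is essentially the classical Dynkin--Specht--Wever proof. The key identity $(\ast)$, established by induction on the tensor length of $c$ using the graded Jacobi identity, is exactly the statement that $\delta_f$ intertwines left tensoring by $c\in\Li(V)$ with left bracketing by $F(c)$ in $H$; once this is in hand, the homomorphism property follows by the symmetrisation you wrote. The coefficient bookkeeping you flag is indeed the place where the argument uses characteristic zero, and your computation there is right: from $\delta_f(u\otimes c')=(p-1)[f(u),F(c')]$ and $\delta_f(c'\otimes u)=[F(c'),f(u)]$ (the latter by the inductive hypothesis with $x=u$) one gets
\[
F([u,c'])=\tfrac1p\bigl(\delta_f(u\otimes c')-(-1)^{\bar u\,\bar{c'}}\delta_f(c'\otimes u)\bigr)=\tfrac1p\bigl((p-1)+1\bigr)[f(u),F(c')]=[f(u),F(c')],
\]
which closes the induction. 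Your parenthetical remark at the end, that specialising to $H=\Li(V)$ and $f=\mathrm{incl}$ recovers the Dynkin projector, is precisely how the paper's subsequent Theorem (the Dynkin projector) is deduced from this one.
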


\begin{theorem}[Dynkin projector]\label{cor.dynkin}
For every graded vector space $V$ over a field of characteristic $0$, the map of graded vector spaces \[\rho\colon T(V) \to \Li(V)\quad \rho(V^{\otimes0})=0\]
\[\quad \rho(v_1 \otimes \dots \otimes v_n)= \frac{1}{n}[v_1, [v_2, \dots ,[v_{n-1}, v_n]\dots ]]\] is a projection. Therefore for every $n > 0$
\[\Li(V)_n = \{x \in V^{\otimes n} |\ \rho(x) = x\}.\]
\end{theorem}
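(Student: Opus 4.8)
The plan is to recognise $\rho$ as the map produced by the Dynkin--Specht--Wever Theorem~\ref{Thm.DSW} applied to a particularly simple choice of target, and then to exploit the uniqueness clause of that theorem. Concretely, I would take $H=\Li(V)$ and let $f\colon V\to \Li(V)$ be the inclusion $f(v)=v$. Feeding this into Theorem~\ref{Thm.DSW} produces a linear map $F\colon T(V)\to \Li(V)$ with $F(1)=0$ and
\[ F(v_1\otimes\cdots\otimes v_n)=\frac1n[v_1,[v_2,\dots,[v_{n-1},v_n]\dots]], \]
which is literally the defining formula for $\rho$. Thus $\rho=F$; in particular $\rho$ is a well-defined linear map taking values in $\Li(V)$, and each iterated bracket exhibits $\rho(V^{\otimes n})\subseteq \Li(V)_n$.

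Next I would extract idempotence from uniqueness. By Theorem~\ref{Thm.DSW}, the restriction $F|_{\Li(V)}\colon \Li(V)\to \Li(V)$ is the unique morphism of graded Lie algebras extending the inclusion $f$. But the identity $\Id_{\Li(V)}$ is also a morphism of graded Lie algebras that restricts to $f$ on $V$, so by uniqueness $F|_{\Li(V)}=\Id_{\Li(V)}$. Equivalently, $\rho(x)=x$ for every $x\in\Li(V)$. Since $\rho$ takes values in $\Li(V)$, this gives $\rho(\rho(x))=\rho(x)$ for all $x\in T(V)$, that is $\rho^2=\rho$, so $\rho$ is a projection onto $\Li(V)$.

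Finally, for the description of $\Li(V)_n$ I would combine these facts by a double inclusion inside $V^{\otimes n}$. If $x\in\Li(V)_n$ then $x\in\Li(V)$, so $\rho(x)=x$ and $x$ lies in the right-hand set (noting $\Li(V)_n\subseteq V^{\otimes n}$ by the preceding lemma). Conversely, if $x\in V^{\otimes n}$ satisfies $\rho(x)=x$, then $x=\rho(x)\in\rho(V^{\otimes n})\subseteq \Li(V)_n$. Hence the two sets coincide.

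I expect the only real subtlety to be bookkeeping rather than a genuine obstacle: one must check that the map $F$ of Theorem~\ref{Thm.DSW} agrees with $\rho$ on all of $T(V)$ (not merely on the summand $\Li(V)$), and that the identity map genuinely satisfies the hypotheses of the uniqueness statement so that it may be compared with $F|_{\Li(V)}$. Once the identification $\rho=F$ and the equality $F|_{\Li(V)}=\Id_{\Li(V)}$ are in place, both the projection property and the characterisation of $\Li(V)_n$ follow formally.
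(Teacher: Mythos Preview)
Your argument is correct. The paper does not give its own proof of this statement but refers the reader to \cite[Appendix~B]{QuiR} and \cite[V.4]{Jac}; your deduction of the projector property from Theorem~\ref{Thm.DSW} via the uniqueness clause (applied with $H=\Li(V)$ and $f$ the inclusion, and comparing $F|_{\Li(V)}$ with $\Id_{\Li(V)}$) is precisely the standard derivation one finds in those references, so there is nothing to add.
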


For the proofs of the above facts we refer either to \cite[Appendix B]{QuiR} or to \cite[V.4]{Jac}. In the latter, the above facts are proved in the non-graded case, but the graded case is similar. 

The following observation is key:

\begin{remark}\label{rem.Fd=dF} 
	Consider the setup of  Theorem~\ref{Thm.DSW}, where $(V,d)$ is now a complex of vector spaces, $H$ a DG-Lie algebra, and $f\colon V \to H$  a morphism of complexes of vector spaces. Then the map $F\colon T(V)_L \to H$ defined in Theorem~\ref{Thm.DSW} is a morphism of DG-Lie algebras. In fact, 
\begin{align*}
	&F(d(v_1 \otimes \cdots \otimes v_n))= \sum^n_{i=1} (-1)^{\overline{v_1}+ \dots + \overline{v_{i-1}}}F( v_1 \otimes \cdots \otimes dv_i \otimes \cdots \otimes v_n)=\\
	& \frac{1}{n}\sum^n_{i=1}{(-1)^{\overline{v_1}+ \dots + \overline{v_{i-1}}}} [f(v_1),[\dots [f(dv_i), \dots [f(v_{n-1}),f(v_n)] \dots ]]] =  \\
	& \frac{1}{n}\sum^n_{i=1} (-1)^{\overline{f(v_1)}+ \dots + \overline{f(v_{i-1})}}[f(v_1),[\dots[df(v_i), \dots [f(v_{n-1}),f(v_n)] \dots ]]]=\\
	&\frac{1}{n}d([f(v_1),[\dots  [f(v_{n-1}),f(v_n)] \dots ]])= dF(v_1 \otimes \cdots \otimes v_n).
\end{align*}
 The differential on the tensor algebra $(T(V)_L, d)$ restricts then to a differential $d\colon \Li(V) \to \Li(V)$:
in fact, by the above calculation, the differential commutes with the Dynkin projector $\rho$, so that  $d(\Li(V)) \subseteq \Li(V)$.
Therefore $\Li$ is left adjoint to the forgetful functor from DG-Lie algebras to complexes of vector spaces, and it makes sense to call $\Li(V)$ the free DG-Lie algebra generated by the complex of vector spaces $(V,d)$. 
\end{remark}

The functor $\Li$ allows to construct coproducts and pushouts of DG-Lie algebras.

\begin{definition}\label{def.coprod_dgla}
	The coproduct of two DG-Lie algebras $(H, [-,-]_H, d_H)$ and $(M, [-,-]_M, d_M)$ is defined as the quotient $H \amalg M= F/I$ of the free DG-Lie algebra
$F=\Li(H \oplus M)$ by the Lie ideal $I$ generated by the elements of the form $[x,y]_H - [x,y]$ and $[u,v]_M -[u,v]$, with $x,y \in H$, $\ u,v \in M$, where by $[-,-]$ we denote the bracket on $\Li(H \oplus M)$. 

Given two morphisms of DG-Lie algebras $f\colon H \to L$ and $g\colon H \to M$, their pushout is $(L \amalg M) /{J}$, where $J$ is the Lie ideal generated by $f(h) - g(h)$, with $h \in H$.
\end{definition}

The category of DG-Lie algebras over a field $\K$ of characteristic zero is hence complete and cocomplete: coproducts and pushouts are defined above, and products and pullbacks are taken in the category of complexes of vector spaces and endowed with natural brackets.

\begin{remark}\label{remark.sommadiretta}
Note that for a complex of vector spaces $V$ and a DG-Lie algebra $(A, [-,-]_A, d_A)$
there is an isomorphism of DG-Lie algebras
\[A \amalg \Li(V) \cong \Li(A \oplus V)/I_A, \]
where $I_A$ is the Lie ideal generated by $[x,y]_A - [x,y]$, for all $x,y \in A$. In fact, $\Li(A \oplus V)/I_A$ has the universal property of the coproduct: let $N$ be a DG-Lie algebra and $f\colon A \to N$, $g\colon \Li(V) \to N$ maps of DG-Lie algebras.
\begin{center}
	\begin{tikzcd}
		A \arrow[r,  hook] \arrow[rd, "f"'] & \Li(A \oplus V)/I_A \arrow[d, "v", dotted] & \Li(V) \arrow[l,  hook'] \arrow[ld, "g"] \\
		& N       .                               &                                              
	\end{tikzcd}
\end{center}
Since $V \subseteq \Li(V)$, $g$ restricts to a map of complexes $g|_V\colon V \to N$, so we obtain a map of complexes $f+{g}|_V\colon A \oplus V \to N$, which extends to a unique morphism of DG-Lie algebras $v\colon \Li(A \oplus V) \to N$. For $x,y\in A$ one has that 
\begin{align*}
	&v([x,y]_A - [x,y]) = f([x,y]_A) - [v(x),v(y)]_N = f([x,y]_A) -  [f(x),f(y)]_N =0,
\end{align*}
and thus $v(I_A)=0$, so there exists a unique map of DG-Lie algebras $v\colon \Li(A \oplus V)/I_A \to N$.
In particular, note that this implies that $\Li(A)/I_A \cong A$.

Notice also that for a graded vector space $V$ and a graded Lie algebra $A$ there is an isomorphism of graded Lie algebras $A \amalg \Li(V) \cong \Li(A \oplus V)/I_A$, where $A \amalg \Li(V)$ is the coproduct in the category of graded Lie algebras.
\end{remark}

\section{Contractions}\label{sec.contract}

This section contains some technical results about the natural extension of a contraction of complexes of vector spaces to a contraction of tensor algebras and free DG-Lie algebras. These results, which will be needed in Section~\ref{sec.modeldgla}, also allow to prove that the cohomology commutes with the functors $T$ and $\Li$.

Firstly, to fix notation, we recall the definition of contraction of complexes of vector spaces.
\begin{definition}\label{def.contract}
	A contraction is a diagram
	\[ \xymatrix{M\ar@<.4ex>[r]^\iota&N\ar@<.4ex>[l]^\pi\ar@(ul,ur)[]^h} \]
	where $M, N$ are complexes of vector spaces, $h \in \Hom^{-1}_{\K} (N, N )$
	and $\iota, \pi$ are morphisms
	of complexes of vector spaces such that:
	\begin{itemize}
		\item(deformation retraction) $\pi \iota= \Id_M, \ \iota\pi - \Id_N= d_Nh + hd_N$
		\item (annihilation properties) $\pi h = h \iota = h^2=0$
	\end{itemize}
\end{definition}

In particular both $\iota$ and $\pi$ are quasi-isomorphisms.

\begin{lemma}\label{lem.contrazione-T}
	Let $M$ and $N$ be complexes of vector spaces. Every contraction
	\[ \xymatrix{M\ar@<.4ex>[r]^\iota&N\ar@<.4ex>[l]^\pi\ar@(ul,ur)[]^h} \]
	extends canonically to a contraction between tensor algebras
	\begin{equation}\label{diag.tensor_cont}
\xymatrix{T(M)\ar@<.4ex>[r]^{T(\iota)}&T(N).\ar@<.4ex>[l]^{T(\pi)}\ar@(ul,ur)[]^k}
	\end{equation}  
\end{lemma}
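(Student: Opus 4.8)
The plan is to define the contraction data $(T(\iota), T(\pi), k)$ explicitly on the tensor algebra and then verify the four contraction axioms degree by degree. Since $T(N) = \bigoplus_{n \geq 0} N^{\otimes n}$ and the functor $T$ applied to maps acts as $T(\iota)|_{N^{\otimes n}} = \iota^{\otimes n}$ and similarly for $\pi$, the only genuinely new piece of data is the homotopy $k \in \Hom^{-1}_\K(T(N), T(N))$. The natural choice, coming from the standard tensor-product homotopy formula, is to set $k = 0$ on $N^{\otimes 0} = \K$ and on each $N^{\otimes n}$ ($n \geq 1$) to define
\[
k(x_1 \otimes \cdots \otimes x_n) = \sum_{i=1}^{n} (-1)^{\overline{x_1} + \cdots + \overline{x_{i-1}}}\, \iota\pi(x_1) \otimes \cdots \otimes \iota\pi(x_{i-1}) \otimes h(x_i) \otimes x_{i+1} \otimes \cdots \otimes x_n,
\]
so that $k$ applies $h$ in one tensor slot, $\iota\pi$ to the left of it, and the identity to the right, with the Koszul signs dictated by moving the degree $-1$ operators past the earlier factors.

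First I would record that $T(\iota)$ and $T(\pi)$ are morphisms of DG-algebras (hence of complexes), which is immediate from functoriality of $T$, and that $k$ indeed lowers degree by one. Next I would check the deformation retraction identity $T(\pi)T(\iota) = \Id_{T(M)}$, which reduces on each summand to $(\pi\iota)^{\otimes n} = \Id_M^{\otimes n}$ and follows at once from $\pi\iota = \Id_M$. The substantive verification is the homotopy identity $T(\iota)T(\pi) - \Id_{T(N)} = d_{T(N)}k + k\,d_{T(N)}$ on each $N^{\otimes n}$: here I would expand both $d_{T(N)}k$ and $k\,d_{T(N)}$ using the definitions, and expect a telescoping cancellation. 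The terms where $d$ hits the slot carrying $h$ produce, via $d_N h + h d_N = \iota\pi - \Id_N$, the transition from the identity in that slot to $\iota\pi$ in that slot; summing over the slot index telescopes so that all internal terms cancel in pairs, leaving the fully-$\iota\pi$ tensor $(\iota\pi)^{\otimes n} = \iota^{\otimes n}\pi^{\otimes n}$ (which is exactly $T(\iota)T(\pi)$ on $N^{\otimes n}$) minus the identity. The annihilation properties $T(\pi)\,k = k\,T(\iota) = k^2 = 0$ would follow from the corresponding scalar identities $\pi h = h\iota = h^2 = 0$ together with the useful auxiliary relations $\pi(\iota\pi) = \pi$, $(\iota\pi)\iota = \iota$, and $(\iota\pi)h = h$, $h(\iota\pi)=0$ derived from the axioms.

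The main obstacle I anticipate is purely bookkeeping rather than conceptual: keeping the Koszul signs consistent when commuting the degree $-1$ maps $h$ and $k$ past homogeneous tensor factors, and correctly pairing up the telescoping terms in the homotopy identity so that the cancellation is transparent. In particular, the fact that $k$ places $\iota\pi$ (not the identity) to the left of the active slot is precisely what makes the telescope close cleanly — it ensures that the ``$\iota\pi$ appears in slot $i$'' term produced by differentiating slot $i$ matches, up to sign, a term produced in the expansion for slot $i+1$. I would organize the computation by fixing the tensor degree $n$, writing $d_{T(N)}$ as the signed sum of single-slot differentials $d^{(j)}$, and tracking the three cases (the differential acts left of, at, or right of the slot where $h$ sits); once this is done for general $n$ the result is immediate, and canonicity of the extension is clear since every ingredient is built functorially from the original contraction data.
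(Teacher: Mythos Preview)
Your proposal is correct as a proof of the lemma as stated, but you construct a \emph{different} homotopy $k$ from the one in the paper. The paper first splits $N=M\oplus W$ with $W=\ker\pi$ (so that $h(M)=0$ and $h(W)\subseteq W$), and on a pure tensor $x_1\otimes\cdots\otimes x_n$ with exactly $p$ of the $x_i$ lying in $W$ it sets
\[
k(x_1\otimes\cdots\otimes x_n)=\frac{1}{p}\sum_{i=1}^{n}(-1)^{\overline{x_1}+\cdots+\overline{x_{i-1}}}\,x_1\otimes\cdots\otimes h(x_i)\otimes\cdots\otimes x_n,
\]
a symmetric ``averaged'' formula with identity maps (not $\iota\pi$) on both sides of the active slot. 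Your formula is instead the standard asymmetric tensor-trick homotopy with $\iota\pi$ to the left and the identity to the right. The paper in fact remarks immediately after the proof that there are two canonical extensions; yours is the other one.

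What each approach buys: your telescoping argument is the textbook one and requires no splitting of $N$, but the resulting $k$ satisfies $k(x\otimes y)=k(x)\otimes y+(-1)^{\overline{x}}T(\iota\pi)(x)\otimes k(y)$, which does not interact well with the graded commutator and hence does not obviously restrict to $\Li(N)\subset T(N)$. The paper's averaged $k$ is engineered precisely so that $k([x,y])$ is again a bracket expression (equation~\eqref{eq.kappaL}), which is what allows them to show that $k$ commutes with the Dynkin projector and thereby prove Lemma~\ref{lem.contrazione-L}, the key input for the rest of the paper. So while your argument establishes the lemma, it would not feed into the subsequent free-Lie-algebra contraction without substantial modification.

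One small correction: you write that $(\iota\pi)h=h$, but in fact $\iota\pi h=\iota(\pi h)=0$. Fortunately the vanishing (not the equality with $h$) is exactly what is needed in your verification of $k^2=0$: when the outer $k$ acts in a slot $j>i$ it places $\iota\pi$ on the slot already carrying $h$, and $\iota\pi h=0$ kills that term. So the argument survives, but the stated auxiliary relation should be corrected.
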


\begin{proof}
	Since the map $\iota$ is injective, it is not restrictive to think of $M$ as a subspace of $N$, so that $\iota \colon M \to N$ is an inclusion, and $N= M \oplus W$, where $W$ denotes the kernel of $\pi$.  Since $h\iota=0$ and $\pi h=0$, $h(M)=0$ and the image of $h$ is contained in $W$, so that $h\colon W \to W$ is a contracting homotopy.
	
	Every element of $T(N)$ is linear combination of elements of the form $x_1 \otimes \cdots \otimes x_n$, with $x_i \in M$ or $x_i \in W$. Define:
	\begin{align*}
	 &k(x_1 \otimes \cdots \otimes x_n)=0\quad  \text{if} \ x_i \in M \ \forall i, \\
	&k(x_1 \otimes \cdots \otimes x_n)= \frac{1}{p}\sum^n_{i=1} (-1)^{\overline{x_1} + \dots +\overline{x_{i-1}}} x_1 \otimes \cdots \otimes h(x_i) \otimes \cdots \otimes x_n
	\end{align*}
	if $p$ is the number of $x_i $ in $ W$. We show that \eqref{diag.tensor_cont} is also a contraction: the fact that $k T(\iota)=0$ follows from the definition of $k$, and
	\[T(\pi) k (x_1 \otimes \cdots \otimes x_n) =  \frac{1}{p}\sum^n_{i=1} (-1)^{\overline{x_1} + \dots +\overline{x_{i-1}}} \pi(x_1) \otimes \cdots \otimes \pi h(x_i) \otimes \cdots \otimes \pi(x_n)= 0, \]
	
	\begin{align*}  & k^2(x_1 \otimes \cdots \otimes x_n)= \\
		&= \frac{1}{p^2}\sum_{i, j<i} (-1)^{\overline{x_1} + \dots +\overline{x_{i-1}}+\overline{x_1} + \dots +\overline{x_{j-1}}} x_1 \otimes \cdots \otimes h(x_j) \otimes \cdots h(x_i) \otimes \cdots \otimes x_n  + \\
		& +\frac{1}{p^2}\sum_{i, j>i} (-1)^{\overline{x_1} + \dots +\overline{x_{i-1}}+\overline{x_1} + \dots +\overline{x_{j-1}}-1} x_1 \otimes \cdots \otimes h(x_i) \otimes \cdots h(x_j) \otimes \cdots \otimes x_n=0 .
	\end{align*}
	If $p \neq 0$, let $J \subseteq \{1, \dots, n\}$, $|J|=p$ denote the set of indices such that $i \in J \iff x_i \in W$, then	
	\begin{align*} 
		& (dk +kd)(x_1 \otimes \cdots \otimes x_n)=\\ &= \frac{1}{p}\sum^n_{i=1} (-1)^{\overline{x_1} + \dots +\overline{x_{i-1}}} d( x_1 \otimes \cdots \otimes h(x_i) \otimes \cdots \otimes x_n) + \\
		&+ \sum^n_{i=1} (-1)^{\overline{x_1} + \dots +\overline{x_{i-1}}} k(x_1 \otimes \cdots \otimes dx_i \otimes \cdots \otimes x_n)= \\
		& =\frac{1}{p}\sum^n_{i=1} (x_1 \otimes \cdots \otimes dh(x_i) \otimes \cdots \otimes x_n +  x_1 \otimes \cdots \otimes hd(x_i) \otimes \cdots \otimes x_n)= \\
		& =\frac{1}{p}\sum^n_{i=1}  x_1 \otimes \cdots \otimes (dh(x_i) + hd(x_i)) \otimes \cdots \otimes x_n =\\
		& =\frac{1}{p}\sum_{i\in J}  x_1 \otimes \cdots \otimes (\iota\pi(x_i) -x_i) \otimes \cdots \otimes x_n. 
	\end{align*}
	Finally, since $W=\ker(\pi)$, this is equal to
\begin{align*}
\frac{1}{p}\sum_{i\in J}  x_1 \otimes \cdots \otimes ( -x_i) \otimes \cdots \otimes x_n = - x_1 \otimes \cdots \otimes x_n
 = ( T(\iota) T(\pi)- \Id_N)(x_1 \otimes \cdots \otimes x_n).
\end{align*} 
\end{proof}
\begin{remark}
	A contraction of complexes of vector spaces extends canonically to two different contractions between tensor algebras: for the other one, see \cite[10.1.8]{LMDT}.
\end{remark}
	Let us now derive a formula for the contraction $k$ which will be useful later. 
	In the setup of Lemma~\ref{lem.contrazione-T},  $T(N)= T(M \oplus W) = \bigoplus_{n \geq 0} (M \oplus W)^{\otimes n},$ so defining 
	\begin{equation}\label{eq.Tab}
		T^{a,b}:= \Span \{x_1 \otimes \cdots \otimes x_{a+b}\ |\ a \text{ of the } x_i \ \text{are in } M, b \text{ of the } x_i \ \text{are in } W   \},
	\end{equation} one has that $T(N) = \bigoplus_{a,b \geq 0} T^{a,b}$.
	Then for $x= x_1 \otimes \cdots x_{a+b} \in T^{a,b}$ and $y=  y_1 \otimes \cdots \otimes y_{p,q} \in T^{p,q}$,
	\begin{align*}
		&k(x_1 \otimes \cdots x_{a+b} \otimes y_1 \otimes \cdots \otimes y_{p+q})= \\ &= \frac{1}{b+q}\sum_{i=1}^{a+b} (-1)^{\overline{x_1} + \dots \overline{x_{i-1}} } x_1 \otimes \cdots \otimes h(x_i) \otimes \cdots \otimes x_{a+b} \otimes y_1 \otimes \cdots \otimes y_{p+q} + \\
		&+ \frac{(-1)^{\overline{x}}}{b+q}\sum_{i=1}^{p+q} (-1)^{ \overline{y_1} + \dots + \overline{y_{i-1}} } x_1 \otimes \cdots \otimes x_{a+b} \otimes y_1 \otimes \cdots \otimes h(y_i) \otimes \cdots \otimes y_{p+q}.
	\end{align*}
	Hence for $x \in T^{a,b}$ and $y \in T^{p,q}$,
	\begin{equation}\label{eq.formula_k}
		k(x \otimes y)= \frac{b}{b+q}k(x) \otimes y + (-1)^{\overline{x}}\frac{q}{b+q}x \otimes k(y).
	\end{equation}

\begin{lemma}\label{lem.contrazione-L}
	Let $M$ and $N$ be complexes of vector spaces. Every contraction
	\[ \xymatrix{M\ar@<.4ex>[r]^\iota&N\ar@<.4ex>[l]^\pi\ar@(ul,ur)[]^h}\]
	extends canonically to a contraction between free DG-Lie algebras
	\[ \xymatrix{\Li(M)\ar@<.4ex>[r]^{\Li(\iota)}&\Li(N).\ar@<.4ex>[l]^{\Li(\pi)}\ar@(ul,ur)[]^k} \]
\end{lemma}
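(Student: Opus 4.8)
The plan is to take the homotopy on $\Li(N)$ to be literally the restriction of the tensor-algebra homotopy $k$ built in Lemma~\ref{lem.contrazione-T}, so that the whole lemma reduces to a single containment. First I would record the easy half: since $\iota$ and $\pi$ are morphisms of complexes, functoriality of $\Li$ (Remark~\ref{rem.Fd=dF}) makes $\Li(\iota)$ and $\Li(\pi)$ morphisms of DG-Lie algebras, and these are nothing but the restrictions of $T(\iota)$ and $T(\pi)$ to the free Lie subalgebras $\Li(M)\subseteq T(M)$ and $\Li(N)\subseteq T(N)$. The deformation-retraction identity $\Li(\pi)\Li(\iota)=\Li(\pi\iota)=\Li(\Id_M)=\Id_{\Li(M)}$ is then immediate from functoriality. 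Granting that the tensor homotopy $k$ satisfies $k(\Li(N))\subseteq\Li(N)$, \emph{all} the remaining contraction axioms follow by restriction from Lemma~\ref{lem.contrazione-T}: for $x\in\Li(N)$ one has $dx\in\Li(N)$, so $(dk+kd)(x)=(T(\iota)T(\pi)-\Id)(x)=\Li(\iota)\Li(\pi)(x)-x$, while $\Li(\pi)k=T(\pi)k|_{\Li(N)}=0$, $k\Li(\iota)=kT(\iota)|_{\Li(M)}=0$, and $k^2=0$. Thus the only real content is that $k$ preserves the free Lie subalgebra.

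For that containment I would exploit the bigrading $T(N)=\bigoplus_{a,b\geq 0}T^{a,b}$ of \eqref{eq.Tab}. Recall from the proof of Lemma~\ref{lem.contrazione-T} that $h(M)=0$ and $h(W)\subseteq W$; consequently, inserting $h$ into a monomial neither changes the number of $M$-factors nor the number of $W$-factors, so $k$ maps $T^{a,b}$ into itself. The crucial point is that on the fixed summand $T^{a,b}$ the normalising factor $1/p$ appearing in the definition of $k$ is the \emph{constant} $1/b$, and hence
\[
k\big|_{T^{a,b}}=\tfrac{1}{b}\,D\big|_{T^{a,b}},
\]
where $D$ is the unique degree $-1$ derivation of the tensor algebra $T(N)$ extending the map $h\colon N\to N$ (with $h|_M=0$). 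In other words, although the global formula \eqref{eq.formula_k} shows that $k$ is only a \emph{weighted} derivation, its restriction to each bidegree is a genuine derivation up to a scalar.

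I would then conclude as follows. Because $D$ is a derivation of the associative algebra $T(N)$, it is also a graded derivation of $T(N)_L$, and since it sends the generators $N$ into $N\subseteq\Li(N)$, an induction on bracket length using the graded Leibniz rule $D[x,y]=[Dx,y]+(-1)^{\bar x}[x,Dy]$ shows $D(\Li(N))\subseteq\Li(N)$. Finally, choosing a homogeneous basis of $N$ adapted to the splitting $N=M\oplus W$, every iterated bracket $[v_1,[\dots,[v_{n-1},v_n]\dots]]$ of such basis elements lies in a single summand $T^{a,b}$ (all its tensor monomials are permutations of $v_1\otimes\cdots\otimes v_n$), so $\Li(N)=\bigoplus_{a,b}\big(\Li(N)\cap T^{a,b}\big)$; applying $k|_{T^{a,b}}=\tfrac1b D|_{T^{a,b}}$ summand by summand gives $k(\Li(N))\subseteq\Li(N)$, completing the argument. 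The main obstacle, and the step that carries all the weight, is precisely this observation that the bigrading makes the prefactor constant and turns $k$ into a scalar multiple of a derivation; without it the varying weights $\tfrac{b}{b+q}$ and $\tfrac{q}{b+q}$ obstruct any direct verification that $k$ respects Lie elements.
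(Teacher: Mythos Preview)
Your argument is correct, and it takes a genuinely different route from the paper. The paper does not use the derivation $D$ at all: instead it shows directly that $k$ commutes with the Dynkin projector $\rho$ of Theorem~\ref{cor.dynkin}. Concretely, the paper first deduces from \eqref{eq.formula_k} the bracket formula
\[
k([x,y])=\tfrac{b}{b+q}[k(x),y]+(-1)^{\bar x}\tfrac{q}{b+q}[x,k(y)]\qquad(x\in T^{a,b}\cap\Li(N),\ y\in T^{p,q}\cap\Li(N)),
\]
and then proves $k\rho=\rho k$ by induction on tensor length using $\rho(x_1\otimes\cdots\otimes x_n)=\tfrac{n-1}{n}[\rho(x_1),\rho(x_2\otimes\cdots\otimes x_n)]$; since $\Li(N)$ is exactly the fixed locus of $\rho$, commutation gives $k(\Li(N))\subseteq\Li(N)$. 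Your approach short-circuits this: the observation $k|_{T^{a,b}}=\tfrac{1}{b}D|_{T^{a,b}}$ is precisely the conceptual reason behind the paper's weighted-Leibniz formula above (if $x\in T^{a,b}$ and $y\in T^{p,q}$ then $[x,y]\in T^{a+p,b+q}$, so $\tfrac{1}{b+q}D[x,y]$ expands to the displayed coefficients), and it lets you bypass the Dynkin projector entirely. What you gain is a cleaner, more structural proof that does not depend on the Dynkin--Specht--Wever machinery; what the paper's approach buys is consistency with the toolkit it has already set up, and a proof that $k$ actually intertwines $\rho$, which is a slightly stronger statement than the containment you need.
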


\begin{proof}
	Consider again $M$ as a subspace of $N$, so that $N= M \oplus W$, with $W = \ker \pi$. By Lemma~\ref{lem.contrazione-T}  the contraction extends to a contraction of tensor algebras, and it follows from the functoriality of $\Li$ that there exist morphisms of DG-Lie algebras $\Li(\iota) \colon \Li(M) \to \Li(N)$ and $\Li(\pi)\colon \Li(N) \to \Li(M)$ such that $\Li(\pi)\Li(\iota)= \Id_{\Li(M)}$.
	We show that the map $k \colon T(N) \to T(N)$ commutes with the Dynkin projector $\ro$ of Corollary~\ref{cor.dynkin}, so that it induces a contraction of free DG-Lie algebras. Using the formula in~\eqref{eq.formula_k} we obtain for  $x \in T^{a,b} \cap \Li(N)$ and $y \in T^{p,q}\cap \Li(N)$
	\begin{equation}\label{eq.kappaL}
	k([x,y])= \frac{b}{b+q}[k(x), y] + (-1)^{\overline{x}} \frac{q}{b+q}[x, k(y)].
	\end{equation}
	We show that $k\ro(x_1 \otimes \cdots \otimes x_n)= \ro k (x_1 \otimes \cdots \otimes x_n) $ by induction on $n$. For $n=1$, $\ro(x_1)= x_1$ and $k(x_1)= h(x_1)$, so that $k\ro(x_1)= k(x_1) = h(x_1)=\ro h(x_1)= \ro k(x_1)$. 
	By definition of $\ro$, one has that
	\begin{align*}
		\ro (x_1 \otimes \cdots \otimes x_n) =
		 \frac{1}{n}[x_1, [x_2, \dots [x_{n-1}, x_n] \dots ]]&=\frac{n-1}{n}[x_1,[\ro(x_2 \otimes \cdots \otimes x_n)]\\ &=\frac{n-1}{n}[\ro(x_1),[\ro(x_2 \otimes \cdots \otimes x_n)].
	\end{align*} 
	Therefore,  for $x_1 \in T^{a,b}$ and $x_2 \otimes \cdots \otimes x_n \in T^{p,q}$,
	\begin{align*}
		&k \ro(x_1 \otimes \cdots \otimes x_n)= \frac{n-1}{n} k\left([\ro(x_1), \ro(x_2 \otimes \cdots \otimes x_n)]\right) =\\
		&\bigg(\frac{n-1}{n}\bigg) \left( \frac{b}{b+q}[k\ro(x_1), \ro(x_2 \otimes \cdots \otimes x_n)]+ \frac{q}{b+q}(-1)^{\overline{x_1}}[\ro(x_1), k\ro(x_2 \otimes \cdots \otimes x_n)]\right) \\
	\end{align*}
	which by the inductive hypothesis is equal to
	\begin{align*}
		& \left(\frac{n-1}{n}\right) \left( \frac{b}{b+q}[\ro k(x_1), \ro(x_2 \otimes \cdots \otimes x_n)]+  \frac{q}{b+q}(-1)^{\overline{x_1}}[\ro(x_1), \ro k(x_2 \otimes \cdots \otimes x_n)] \right).
	\end{align*}
	On the other hand,
	\begin{align*}
		&\ro k(x_1 \otimes \cdots \otimes x_n)= \\
		&\frac{b}{b+q} \ro (k(x_1)\otimes x_2 \otimes \cdots \otimes x_n) + (-1)^{\overline{x_1}} \frac{q}{b+q}\ro(x_1 \otimes k(x_2 \otimes \cdots \otimes x_n))=\\
		&\left(\frac{n-1}{n} \right) \left( \frac{b}{b+q} [\ro k(x_1), \ro (x_2 \otimes \cdots \otimes x_n)] + (-1)^{\overline{x_1}}  \frac{q}{b+q} [\ro (x_1), \ro k(x_2 \otimes \cdots \otimes x_n)]\right),
	\end{align*}
which concludes the proof. 
\end{proof}

As a consequence of 
the previous lemmas, one can prove the commutativity of the cohomology with the functors $T$ and $\Li$, see also \cite[Appendix B]{QuiR}.

\begin{proposition}\label{prop.isoH}
	For any complex of vector spaces $(V,d)$ there are isomorphisms
	\begin{enumerate}
		\item $H^*(T(V)) \cong T(H^*(V))$,
		\item $H^*(\Li(V)) \cong \Li(H^*(V))$.
	\end{enumerate}
\end{proposition}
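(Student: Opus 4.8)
The plan is to reduce both isomorphisms to the two preceding lemmas by producing a contraction of $(V,d)$ onto its cohomology and then transporting it through the functors $T$ and $\Li$. The point is that once $(V,d)$ is contracted onto a complex with trivial differential, the induced objects $T(H^*(V))$ and $\Li(H^*(V))$ carry the zero differential and therefore compute their own cohomology.

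First I would construct a contraction
\[ \xymatrix{H^*(V)\ar@<.4ex>[r]^\iota&V\ar@<.4ex>[l]^\pi\ar@(ul,ur)[]^h} \]
in which $H^*(V)$ is regarded as a complex with zero differential. Since $\K$ is a field, each $V^n$ decomposes as $V^n = B^n \oplus \mathcal{H}^n \oplus C^n$, where $B^n = B^n(V)$ is the space of coboundaries, $\mathcal{H}^n$ is a chosen complement of $B^n$ inside the cocycles $Z^n(V)$ (so that $\mathcal{H}^n \cong H^n(V)$), and $C^n$ is a complement of $Z^n(V)$ in $V^n$; the differential then restricts to an isomorphism $d\colon C^n \xrightarrow{\sim} B^{n+1}$. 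I would take $\iota$ to be the inclusion of $\mathcal{H}^* \cong H^*(V)$, let $\pi$ be the projection onto $\mathcal{H}^*$ annihilating $B^*$ and $C^*$, and define $h$ to be zero on $\mathcal{H}^* \oplus C^*$ and equal to $(d|_{C^n})^{-1}$ on $B^{n+1}$. A direct check shows that $\pi\iota = \Id$, that $\iota\pi - \Id = dh + hd$, and that $\pi h = h\iota = h^2 = 0$, so this is indeed a contraction.

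Next I would apply Lemma~\ref{lem.contrazione-T} to obtain a contraction between $T(H^*(V))$ and $T(V)$, and Lemma~\ref{lem.contrazione-L} to obtain a contraction between $\Li(H^*(V))$ and $\Li(V)$. Because $H^*(V)$ carries the zero differential, the induced differential on $T(H^*(V))$, and hence on its graded Lie subalgebra $\Li(H^*(V))$, vanishes identically, so these complexes coincide with their own cohomology: $H^*(T(H^*(V))) = T(H^*(V))$ and $H^*(\Li(H^*(V))) = \Li(H^*(V))$.

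Finally, since the maps $\iota$ and $\pi$ in any contraction are quasi-isomorphisms (as noted after Definition~\ref{def.contract}), so are the maps $T(\iota), T(\pi)$ and $\Li(\iota), \Li(\pi)$ occurring in the two extended contractions. Passing to cohomology then yields
\[ H^*(T(V)) \cong H^*(T(H^*(V))) = T(H^*(V)) \quad\text{and}\quad H^*(\Li(V)) \cong H^*(\Li(H^*(V))) = \Li(H^*(V)), \]
which are the desired isomorphisms. The only step requiring genuine care is the construction of the contraction onto cohomology together with the verification of its axioms; once it is in place, the statement follows formally from the two lemmas and the observation that the differential is trivial on the cohomology.
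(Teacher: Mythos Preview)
Your approach is essentially the same as the paper's: construct a contraction of $V$ onto its cohomology, apply Lemmas~\ref{lem.contrazione-T} and~\ref{lem.contrazione-L}, and use that the differential on $H^*(V)$ vanishes. The only cosmetic difference is that the paper first writes $V=H\oplus W$ with $W$ acyclic, takes \emph{any} contracting homotopy $\gamma$ on $W$, and then replaces it by $h:=\gamma d\gamma$ to force $h^2=0$; your explicit formula via the decomposition $V=B^*\oplus\mathcal{H}^*\oplus C^*$ achieves $h^2=0$ directly and is arguably cleaner.

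One small correction: with the paper's sign convention $\iota\pi-\Id_V=d h+h d$ (Definition~\ref{def.contract}), your $h$ should be $-(d|_{C^n})^{-1}$ on $B^{n+1}$, not $+(d|_{C^n})^{-1}$. As written, on $b\in B^*$ you get $dh(b)+hd(b)=b$ rather than $-b$, and similarly on $C^*$. This is a trivial sign slip and does not affect the argument.
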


\begin{proof}
There exists a splitting $V=H\oplus W$, with $W$ an acyclic complex and $H\cong H^*(V)$ a complex with trivial differential. The complex $W$ is contractible, so that there exists $\gamma \in \Hom^{-1}_\K(W,W)$ with $d\gamma+\gamma d=-\Id_W$. 
 Since $\gamma^2$ is not necessarily zero (as it should be according to Definition~\ref{def.contract}), we define
$h :=  \gamma d \gamma \in  \Hom^{-1}_{\K}(V,V)$. One can check that $dh + hd = -\Id_V$ still holds, and that additionally $h^2=0$.
This map can be extended to $h\colon H \oplus W \to H \oplus W$ by setting $h(H)=0$, so that the diagram
	\[ \xymatrix{H\ar@<.4ex>[r]^\iota&{V}\ar@<.4ex>[l]^\pi\ar@(ul,ur)[]^h} \]
	is a contraction. By Lemma~\ref{lem.contrazione-T}, this extends to a contraction of tensor algebras
	\[ \xymatrix{T(H)\ar@<.4ex>[r]^\iota&{T(V).}\ar@<.4ex>[l]^\pi\ar@(ul,ur)[]^k} \]
	This implies that $H^*(T(H)) \cong H^*(T(V))$, and since $H$ has trivial differential, $H^*(T(H)) = T(H) \cong T(H^*(V))$.

	Let $\Li(V)$ be the free DG-Lie algebra generated by $(V,d)$. By Lemma~\ref{lem.contrazione-L} the diagram 
	\[ \xymatrix{\Li(H)\ar@<.4ex>[r]^\iota&{\Li(V)}\ar@<.4ex>[l]^\pi\ar@(ul,ur)[]^k} \]
	is a contraction, which implies $H^*(\Li(H)) \cong H^*(\Li(V))$, and since  $H$ has trivial differential, $H^*(\Li(H))= \Li(H) \cong \Li(H^*(V))$.
\end{proof}

\section{Model and pre-model structures}\label{sec.model}

The topics of this section are a brief reminder about the definition and basic properties of model categories, the definition of left pre-model structures, and the fact that a left pre-model structure naturally induces a model structure. 
The notion of left pre-model structure will be used in the next section to give a proof of the existence of the model structure on the category of unbounded DG-Lie algebras, which is the goal of the paper.

In a model category, two out of the three classes of weak equivalences, cofibrations and fibrations determine the third. However it is important to note that, for example, if
$\sW$ and $\sF$ are two classes satisfying (M1) and (M2) in Definition~\ref{def.mod} below, in general they do not extend to a model structure.  It is typical to try to construct a model structure by establishing two of the three classes and seeing whether they induce a model structure. 
Left pre-model structures are a useful instrument to prove the existence of a model structure when one has fixed the weak equivalences, the fibrations, and a subclass of the cofibrations (see also \cite[Section 3.2]{tesiGreco} and \cite{defodiag}).

Firstly, we briefly recall the definitions of lifting properties, retracts and that of a model structure.
The standard references for model categories, introduced by Quillen in \cite{QuiH}, are the books \cite{Hir,Hov}.

\begin{definition} Consider two morphisms $i\colon A\to B$ and $p\colon C\to D$ in a category $\bM$.
 Then $i$ has the left lifting property with respect to $p$ or, equivalently, $p$ has the right lifting
	property with respect to $i$ if for every commutative diagram of the form
	\begin{center}
		\begin{tikzcd}
			A \arrow[r, "f"] \arrow[d, "i"'] & C \arrow[d, "p"] \\
			B \arrow[r, "g"'] \arrow[ru, dotted, "h"] & D
		\end{tikzcd}
	\end{center}  
	there exists a lift $h \colon B \to Y$ such that $hi = f$  and $ph = g$.
\end{definition}

\begin{definition}
	A morphism  $f$ is called a retract of a morphism $g$ if there exists a commutative diagram of the form
	\begin{center}
		\begin{tikzcd}
			A \arrow[r] \arrow[d, "f"'] \arrow[rr, bend left, "Id_A"] & B  \arrow[r] \arrow[d, "g"] & A \arrow[d, "f"] \\
			C  \arrow[r] \arrow[rr, bend right, "Id_C"]& D  \arrow[r] & C .
		\end{tikzcd}
	\end{center}
\end{definition}
\begin{definition}\label{def.mod}
	A model structure on a complete and cocomplete category $\bM$  is the data of three classes of maps, called weak equivalences, 
	fibrations and cofibrations, which satisfy the following four axioms:
	
	\begin{enumerate}
		\item[(M1)] (2-out-of-3) If $f$ and $g$ are morphisms in $\bM$ such that the composition $gf$ is defined, and two out of the three $f$, $g$ and $gf$ are weak equivalences, so is the third.
		\item[(M2)] (Retracts) If $f$ and $g$ are maps in $\bM$ such that $f$ is a retract of $g$, and $g$ is a weak equivalence, a cofibration or a fibration, then so is $f$.
		\item[(M3)] (Lifting) A trivial fibration is map which is both a fibration and a weak equivalence; a trivial cofibration is map which is both a cofibration and a weak equivalence.
		\begin{enumerate}
			\item Cofibrations have the left lifting property with respect to trivial fibrations.
			\item Trivial cofibrations have the left lifting property with respect to fibrations.
		\end{enumerate}
		\item[(M4)](Factorisation) Every morphism  $g$ in $\bM$ admits  two factorisations: 
		\begin{enumerate}
			\item 
			$g=qj$, where $j$ is a trivial cofibration and  $q$ is a fibration,
			\item 
			$g=pi$, where $i$ is a cofibration and $p$ is a trivial fibration.
		\end{enumerate}
	\end{enumerate}
\end{definition}

As usual, the classes of weak equivalences, fibrations and cofibrations will be denoted respectively by $\sW,\sF$ and $\sC$. We shall denote by $\sF\sW=\sF\cap \sW$ the class of trivial fibrations and by  $\sC\sW=\sC\cap \sW$ the class of trivial cofibrations. An object is said to be cofibrant if the initial map is a cofibration; it is called fibrant if the terminal map is a fibration. 


Since they will be used later on, a few standard results in model category theory are recalled for the reader's convenience. 
We refer to \cite[Section 7.2]{Hir} or \cite[Section 1.1]{Hov} for the proofs.

\begin{lemma}\label{ret-sol}
	If $j$ has the left (right) lifting property with respect to $f$, and $i$ is a retract of $j$, then $i$ has the left (right) lifting property with respect to $f$. 
\end{lemma}

\begin{proposition}[Retract Argument]\label{retractargument}
	Let $g$ be a map which can be factored as $g=pi$.
	\begin{enumerate}
			\item If $g$ has the left lifting property with respect to $p$
			then $g$ is a retract of $i$.
			\item If $g$ has the right lifting property with respect to $i$
			then $g$ is a retract of $p$.
		\end{enumerate}
\end{proposition}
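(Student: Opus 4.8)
The plan is to prove the \textbf{Retract Argument} directly from the definition of retract and the lifting property, since both parts are entirely formal and dual to one another. I will treat part (1) in detail and then indicate that part (2) follows by the evident dualisation (reversing all arrows). The whole argument amounts to exhibiting the correct commutative diagram and then reading off the retract data from a single lift.

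For part (1), I begin with the factorisation $g = pi$, say $g\colon A \to C$ with $i\colon A \to B$ and $p\colon B \to C$, and I assume $g$ has the left lifting property with respect to $p$. The key move is to set up the lifting square whose left edge is $g$ and whose right edge is $p$:
\begin{center}
	\begin{tikzcd}
		A \arrow[r, "i"] \arrow[d, "g"'] & B \arrow[d, "p"] \\
		C \arrow[r, "\Id_C"'] \arrow[ru, dotted, "h"] & C
	\end{tikzcd}
\end{center}
This square commutes because $p i = g = \Id_C \circ g$. By hypothesis there is a lift $h\colon C \to B$ with $h g = i$ and $p h = \Id_C$. The second identity is exactly what is needed to make $h$ a section of $p$, and together with the first it will supply the middle vertical and the two horizontal rows of the retract diagram.

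From here the remaining step is purely bookkeeping: I assemble the retract diagram
\begin{center}
	\begin{tikzcd}
		A \arrow[r, "i"] \arrow[d, "g"'] \arrow[rr, bend left, "\Id_A"] & B  \arrow[r, "p"] \arrow[d, "p"] & C \arrow[d, "\Id_C"] \\
		C  \arrow[r, "h"] \arrow[rr, bend right, "\Id_C"]& B  \arrow[r, "p"] & C
	\end{tikzcd}
\end{center}
and check commutativity of each cell together with the two bent composites. The top row composite is $p i = g$, but since the target column is $C$ I instead use that the horizontal composite $A \to B \to C$ across the top is $p i$; more carefully, I will align the squares so that the outer columns are $g$ and $\Id_C$ and the inner column is $p$, using $hg = i$ on the left square and $ph = \Id_C$ on the bottom row. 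The bent arrows $\Id_A$ and $\Id_C$ then encode that $g$ is a retract of $p$. The only genuine verification is that the left-hand square of this diagram commutes, i.e. that $p g = \Id_C \cdot g$ and that the induced maps agree with $hg = i$; everything else is an identity by construction. I do not anticipate any real obstacle here: the entire content is the single application of the lifting property, and the difficulty is merely notational, namely drawing the retract diagram so that its rows and columns match Definition of retract exactly. Part (2) is obtained by the same argument applied to the opposite category, using that $g$ has the right lifting property with respect to $i$ to produce a lift exhibiting $g$ as a retract of $p$ instead.
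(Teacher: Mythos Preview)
Your overall strategy is the standard one (and indeed the paper simply refers to Hirschhorn and Hovey rather than giving its own proof): form the lifting square with $g$ on the left and $p$ on the right, obtain a diagonal $h\colon C\to B$ with $hg=i$ and $ph=\Id_C$, and use these two identities to assemble the retract diagram. That part is fine.

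The problem is the retract diagram you actually wrote down. In part~(1) you must show that $g$ is a retract of $i$, so the \emph{middle column} of the retract diagram must be $i\colon A\to B$, not $p$. Your diagram has middle vertical arrow labelled $p$ with source and target both $B$, which is ill-typed ($p$ goes $B\to C$), and your top bent arrow ``$\Id_A$'' lands in $C$, which is impossible. You then conclude that ``$g$ is a retract of $p$'', which is the statement of part~(2), not part~(1). The correct diagram for part~(1) is
\begin{center}
\begin{tikzcd}
A \arrow[r, "\Id_A"] \arrow[d, "g"'] \arrow[rr, bend left, "\Id_A"] & A \arrow[r, "\Id_A"] \arrow[d, "i"] & A \arrow[d, "g"] \\
C \arrow[r, "h"'] \arrow[rr, bend right, "\Id_C"'] & B \arrow[r, "p"'] & C
\end{tikzcd}
\end{center}
whose two squares commute precisely because $hg=i$ and $pi=g$, and whose bottom row composes to $\Id_C$ because $ph=\Id_C$. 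Once you replace your diagram with this one the argument is complete; part~(2) is then genuinely dual, with the retract diagram having $p$ in the middle column and identities along the bottom row.
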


\begin{lemma}\label{lem.liftingdefinition} 
	Let $\bM$ be a model category,
	\begin{enumerate}
		\item a map in $\bM$ that has the left lifting property with respect to all trivial fibrations is a cofibration;
		\item a map in $\bM$ that has the right lifting property with respect to all trivial cofibrations is a fibration.
	\end{enumerate}
\end{lemma}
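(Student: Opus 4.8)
The plan is to prove both statements by combining the factorisation axiom (M4) with the Retract Argument (Proposition~\ref{retractargument}) and the closure of cofibrations and fibrations under retracts (axiom (M2)). The two parts are dual, so I would treat them symmetrically.

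For part (1), suppose $f\colon A \to B$ has the left lifting property with respect to every trivial fibration. First I would apply the factorisation axiom (M4)(b) to write $f = pi$, where $i$ is a cofibration and $p$ is a trivial fibration. Since $p$ is in particular a trivial fibration and $f$ lifts against all such maps by hypothesis, $f$ has the left lifting property with respect to $p$. I would then invoke Proposition~\ref{retractargument}(1) to conclude that $f$ is a retract of $i$. Finally, since $i$ is a cofibration and cofibrations are closed under retracts by (M2), it follows that $f$ is a cofibration.

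For part (2), the argument is the formal dual. Suppose $f\colon A \to B$ has the right lifting property with respect to every trivial cofibration. Using (M4)(a), I would factor $f = qj$ with $j$ a trivial cofibration and $q$ a fibration. As $f$ lifts on the right against all trivial cofibrations, it in particular has the right lifting property with respect to $j$, so Proposition~\ref{retractargument}(2) shows that $f$ is a retract of $q$. Since $q$ is a fibration and fibrations are retract-closed by (M2), $f$ is a fibration.

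I do not expect any genuine obstacle here: this is a standard consequence of the model-category axioms, and the entire content is a correct bookkeeping of which factorisation to use and which half of the Retract Argument applies. The only point demanding care is matching the variance of the lifting properties to the correct factorisation, ensuring that in part (1) one factors as cofibration followed by trivial fibration (so that $f$ lifts against the right-hand factor), and in part (2) as trivial cofibration followed by fibration (so that $f$ lifts against the left-hand factor).
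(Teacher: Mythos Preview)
Your proof is correct and is exactly the standard argument; the paper does not actually give a proof of this lemma but simply refers the reader to \cite[Section 7.2]{Hir} and \cite[Section 1.1]{Hov}, where precisely the argument you outline appears. There is nothing to add.
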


\begin{lemma}\label{lem.comp_lift}
Let $f$ and $g$ be maps such that the composition $fg$ is defined. If $f$ and $g$ have the left (right) lifting property with respect to a map $p$, then the composition $f g$ has the left (right) lifting property with respect to $p$.
\end{lemma}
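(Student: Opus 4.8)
The plan is to prove the statement by a direct two-stage diagram chase, the left and right cases being dual to each other. I will spell out the left lifting property; write $g\colon A \to B$ and $f\colon B \to C$, so that $fg\colon A \to C$, and let $p\colon X \to Y$ be the map against which both $f$ and $g$ lift. Given a commutative square with $fg$ on the left and $p$ on the right,
\begin{center}
	\begin{tikzcd}
		A \arrow[r, "u"] \arrow[d, "fg"'] & X \arrow[d, "p"] \\
		C \arrow[r, "v"'] & Y
	\end{tikzcd}
\end{center}
I must produce a lift $C \to X$. The idea is to peel off $g$ first and then $f$.

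First I would reinterpret the outer square as a lifting problem for $g$, with $g$ on the left, $p$ on the right, top edge $u$, and bottom edge $vf\colon B \to Y$; this commutes precisely because $pu = vfg = (vf)g$. Since $g$ has the left lifting property with respect to $p$, there is a map $h_1\colon B \to X$ with $h_1 g = u$ and $p h_1 = vf$. The second relation is exactly what makes
\begin{center}
	\begin{tikzcd}
		B \arrow[r, "h_1"] \arrow[d, "f"'] & X \arrow[d, "p"] \\
		C \arrow[r, "v"'] & Y
	\end{tikzcd}
\end{center}
a commutative square for $f$. Applying the left lifting property of $f$ against $p$ yields $h_2\colon C \to X$ with $h_2 f = h_1$ and $p h_2 = v$. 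Then $h_2$ is the desired lift, since $h_2(fg) = (h_2 f)g = h_1 g = u$ and $p h_2 = v$.

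For the right lifting property I would run the dual argument, peeling off $f$ first and then $g$. Given a square with $p$ on the left and $fg$ on the right, the identity $fgu = vp$ exhibits $gu$, together with $f$ on the right, as a lifting problem for $f$ against $p$, solved by some $h_1$ using the right lifting property of $f$; then $u$, together with $g$ on the right and $h_1$ as bottom edge, is a lifting problem for $g$ against $p$, solved by some $h_2$ using the right lifting property of $g$, and $h_2$ is the lift sought.

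There is no genuine obstacle here: the whole content is the bookkeeping of which composite plays the role of the bottom (respectively top) edge at each stage, together with the observation that the two relations produced by the first lift are exactly the hypotheses needed to invoke the lifting property at the second stage. The only point requiring attention is the order in which $f$ and $g$ are consumed—$g$ before $f$ in the left case, $f$ before $g$ in the right case—which is dictated by the direction of the composite $fg$.
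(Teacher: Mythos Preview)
Your argument is correct and is exactly the standard two-stage diagram chase. The paper does not actually supply a proof of this lemma: it groups Lemma~\ref{lem.comp_lift} with several other standard facts (Lemma~\ref{ret-sol}, Proposition~\ref{retractargument}, Lemma~\ref{lem.liftingdefinition}) and refers the reader to \cite[Section 7.2]{Hir} or \cite[Section 1.1]{Hov}. The proof found in those references is precisely the one you have written, so there is nothing to compare.
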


Next, we recall the definition of left pre-model structure, and prove that a left pre-model structure extends naturally to a model structure. 

\begin{definition}\label{def.leftpremodel}
	A left pre-model structure on $\bM$ is the data of four classes of maps $\mathcal{W},\mathcal{F}, \mathcal{C'},\mathcal{CW'}$ such that:  
	\begin{enumerate}
		\item[(L1)] The maps in $\mathcal{W}$ satisfy the 2-out-of-3 property;
		\item[(L2)] The classes $\mathcal{W}$ and $\mathcal{F}$ are closed under retracts;
		\item[(L3)] There is an inclusion $\mathcal{CW'} \subset \mathcal{W}$;
		\item[(L4)] The maps in $\mathcal{C'}$ have the left lifting property with respect to the maps in $\mathcal{F}\cap \mathcal{W}$; the maps in $\mathcal{CW'}$ have the left lifting property with respect to the maps in $\mathcal{F}$;
		\item[(L5)]  Every map $g$ in $\bM$ has two  factorisations: 
		\begin{enumerate}
			\item $g=qj$, where $i$ is in $\mathcal{CW'}$ and $p$ is in $\mathcal{F}$;
			\item $g=pi$, where $j$ is in $\mathcal{C'}$ and  $q$ is in $\mathcal{F}\cap \mathcal{W}$.
		\end{enumerate}
	\end{enumerate}
\end{definition}

There exists a dual definition of right pre-model structure, useful when one has fixed the classes of weak equivalences, cofibrations, and some particularly easy fibrations. 
A left pre-model structure gives rise to a unique model structure in the following way:
\begin{theorem}\label{thm.premodel}
	Given a left pre-model structure $\mathcal{W},\mathcal{F}, \mathcal{C'},\mathcal{CW'} $  there exists a unique model structure  where the weak equivalences are the maps in $\mathcal{W}$ and the fibrations are the maps in $\mathcal{F}$. Notably, the cofibrations are the retracts of $\mathcal{C'}$, and the trivial cofibrations are the retracts of $\mathcal{CW'}$.
\end{theorem}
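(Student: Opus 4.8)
The plan is to take the weak equivalences to be $\mathcal{W}$ and the fibrations to be $\mathcal{F}$, to \emph{define} the cofibrations $\sC$ as the retracts of maps in $\mathcal{C'}$ and the trivial cofibrations as the retracts of maps in $\mathcal{CW'}$, and then to verify the four axioms of Definition~\ref{def.mod} together with uniqueness. The single most useful preliminary step is to re-express these two classes purely in terms of lifting properties, since this makes (M2) and (M3) almost automatic. Concretely, I would first establish:
\begin{enumerate}
	\item a map is a retract of a map in $\mathcal{C'}$ if and only if it has the left lifting property with respect to $\mathcal{F}\cap\mathcal{W}$;
	\item a map is a retract of a map in $\mathcal{CW'}$ if and only if it has the left lifting property with respect to $\mathcal{F}$.
\end{enumerate}
In each case the forward direction follows from (L4) together with Lemma~\ref{ret-sol} (lifting properties pass to retracts), while the converse is the Retract Argument (Proposition~\ref{retractargument}(1)) applied to the appropriate factorisation in (L5): if $g$ lifts against $\mathcal{F}\cap\mathcal{W}$, factor $g=pi$ with $i\in\mathcal{C'}$ and $p\in\mathcal{F}\cap\mathcal{W}$ using (L5)(b), so that $g$ lifts against $p$ and is therefore a retract of $i$; symmetrically, for (2) one uses the factorisation $g=qj$ of (L5)(a) with $j\in\mathcal{CW'}$ and $q\in\mathcal{F}$.

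The crux is to check that these definitions are internally consistent, i.e. that the trivial cofibrations, defined as retracts of $\mathcal{CW'}$, coincide with $\sC\cap\mathcal{W}$, as (M3) demands. The inclusion ``retracts of $\mathcal{CW'}$ $\subseteq \sC\cap\mathcal{W}$'' is straightforward: such maps lie in $\mathcal{W}$ by (L3) and (L2), and since they lift against all of $\mathcal{F}\supseteq\mathcal{F}\cap\mathcal{W}$ they are cofibrations by reformulation (1). The reverse inclusion is the main obstacle and is where all five pre-model axioms interact. Given $g\in\sC\cap\mathcal{W}$, I would factor $g=qj$ with $j\in\mathcal{CW'}\subseteq\mathcal{W}$ and $q\in\mathcal{F}$ via (L5)(a); then the $2$-out-of-$3$ property (L1) forces $q\in\mathcal{F}\cap\mathcal{W}$; since $g$ is a cofibration it lifts against $q$, so the Retract Argument exhibits $g$ as a retract of $j\in\mathcal{CW'}$, placing $g$ in the class of trivial cofibrations.

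With the two lifting-property descriptions in hand the axioms follow quickly. (M1) is exactly (L1). For (M2), closure of $\sC$ and of the trivial cofibrations under retracts is Lemma~\ref{ret-sol} applied to descriptions (1) and (2), and $\mathcal{W}$ and $\mathcal{F}$ are closed under retracts by (L2). Axioms (M3)(a)--(b) are precisely descriptions (1)--(2), since $\mathcal{F}\cap\mathcal{W}$ is the class of trivial fibrations and, by the previous paragraph, $\sC\cap\mathcal{W}$ is the class of retracts of $\mathcal{CW'}$. Axiom (M4) is (L5) together with the trivial observations $\mathcal{C'}\subseteq\sC$ and $\mathcal{CW'}\subseteq\sC\cap\mathcal{W}$ (every map is a retract of itself). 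Finally, uniqueness follows from Lemma~\ref{lem.liftingdefinition}(1): in any model structure with weak equivalences $\mathcal{W}$ and fibrations $\mathcal{F}$, the trivial fibrations must be $\mathcal{F}\cap\mathcal{W}$, and the cofibrations are then forced to be exactly the maps with the left lifting property against them, matching description (1); hence all three classes are determined.
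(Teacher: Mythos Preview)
Your argument is correct and complete. Note, however, that the paper does not actually prove this theorem: after the statement it simply writes ``We refer to \cite{defodiag} for the proof.'' There is therefore no in-paper proof to compare against. That said, your approach---recasting the classes of retracts of $\mathcal{C'}$ and of $\mathcal{CW'}$ as lifting classes via (L4), (L5) and the Retract Argument, and then using (L1) together with the factorisation (L5)(a) to show $\sC\cap\mathcal{W}$ coincides with the retracts of $\mathcal{CW'}$---is the standard one and is almost certainly what appears in the cited reference. The only cosmetic point: in (L5) as stated in the paper the labels of the letters are slightly crossed (the factorisation $g=qj$ has $i\in\mathcal{CW'}$, $p\in\mathcal{F}$ and $g=pi$ has $j\in\mathcal{C'}$, $q\in\mathcal{F}\cap\mathcal{W}$), so when you invoke (L5)(a) and (L5)(b) be careful to match the roles rather than the letters.
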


We refer to \cite{defodiag} for the proof. 

\section{Model structure on DG-Lie algebras}\label{sec.modeldgla}

In this section we work in the category of unbounded DG-Lie algebras over a fixed field $\K$ of characteristic zero.
There is a model structure on this category, constructed in a more general context in \cite{Hin}, where weak equivalences are quasi-isomorphisms and fibrations are surjective morphisms. Cofibrations are then by Lemma~\ref{lem.liftingdefinition} the maps with the left lifting property with respect to surjective quasi-isomorphisms.
The goal of this section is to give an elementary proof of the existence of this model structure, which will be done via the notions of pre-model structure of the previous section, and of semifree extension of DG-Lie algebras.
 We also give a more explicit description of cofibrations as retracts of semifree extensions.

\begin{lemma}\label{lem.derivazamalg}
	Let $A$ be a graded Lie algebra and $V$ a graded vector space.
	There is a bijection between the degree $n$ derivations of $A \amalg \Li(V)$ that preserve the Lie subalgebra $A$ and pairs $(f,g)$ such that $f \in \Der_\K^n(A,A)$ and $g \in \Hom_\K^n(V, A \amalg \Li(V))$.
\end{lemma}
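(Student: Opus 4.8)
The plan is to define the candidate bijection explicitly and verify injectivity and surjectivity separately, relying on the presentation $A \amalg \Li(V) \cong \Li(A \oplus V)/I_A$ from Remark~\ref{remark.sommadiretta}. In one direction, I would send a degree $n$ derivation $D$ of $L := A \amalg \Li(V)$ with $D(A) \subseteq A$ to the pair $(f,g)$ given by $f := D|_A$ and $g := D|_V$, where $V$ is regarded as a subspace of $\Li(V) \subseteq L$. First I would check this is well defined: since $A$ is a graded Lie subalgebra on which the bracket of $L$ restricts to that of $A$, and $D$ satisfies the graded Leibniz rule, the restriction satisfies $f([x,y]_A) = [f(x),y]_A + (-1)^{n\overline{x}}[x,f(y)]_A$, so $f \in \Der^n_\K(A,A)$; and $g \in \Hom^n_\K(V,L)$ is simply a linear map. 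Injectivity is then immediate, since $A$ and $V$ generate $L$ as a graded Lie algebra and a derivation is determined by its values on a generating set via the Leibniz rule, so $D$ is recovered from $(f,g)$.

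For surjectivity, given an arbitrary pair $(f,g)$, I would build the derivation through the free Lie algebra $\Li(A \oplus V)$, of which $L$ is the quotient by $I_A$. I would choose a $\K$-linear lift $\tilde g \colon V \to \Li(A \oplus V)$ of $g$ along the projection $\pi \colon \Li(A\oplus V) \to L$. By the universal property of the free Lie algebra applied to derivations---a degree $n$ derivation of $\Li(W)$ is determined by, and may be prescribed arbitrarily through, a degree $n$ linear map $W \to \Li(W)$, which follows from Theorem~\ref{Thm.DSW} applied to the semidirect product of $\Li(W)$ with a suitably shifted copy of itself---there is a unique degree $n$ derivation $\tilde D$ of $\Li(A\oplus V)$ whose restriction to $A$ is $f$ (followed by $A \hookrightarrow \Li(A\oplus V)$) and whose restriction to $V$ is $\tilde g$.

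The hard part will be checking that $\tilde D$ descends to $L$, that is, that $\tilde D(I_A) \subseteq I_A$. Since $\tilde D$ is a derivation and $I_A$ is the Lie ideal generated by the elements $[x,y]_A - [x,y]$ with $x,y \in A$, it suffices to verify that $\tilde D$ sends each such generator into $I_A$. Using that $f$ is a derivation of $A$ and that $f(x), f(y) \in A$, a direct computation gives
\[
\tilde D([x,y]_A - [x,y]) = \big([f(x),y]_A - [f(x),y]\big) + (-1)^{n\overline{x}}\big([x,f(y)]_A - [x,f(y)]\big),
\]
and both summands are again of the generating form, hence lie in $I_A$. Because a derivation carries the ideal generated by a set $S$ into the ideal generated by $S \cup \tilde D(S)$, this yields $\tilde D(I_A) \subseteq I_A$, so $\tilde D$ induces a degree $n$ derivation $D$ on $L$. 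Finally I would verify $D|_A = f$ and $D|_V = g$ directly from $\pi \tilde D = D \pi$, the identity $\pi \tilde g = g$, and the fact that $\pi$ is the identity on $A$; the inclusion $D(A) \subseteq A$ follows since $f(A) \subseteq A$. This realises the pair $(f,g)$ and completes the proof.
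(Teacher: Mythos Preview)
Your proof is correct, but you take a genuinely different route from the paper. The paper works directly in $A \amalg \Li(V)$: given $(f,g)$, it defines $\phi$ on $A$ and $V$ and then writes down the Leibniz-type formula on iterated brackets $[x_1,[x_2,\ldots,[x_{m-1},x_m]\ldots]]$ with each $x_i \in A \cup V$, remarking in one line that the resulting map is well defined precisely because $f$ is already a derivation of $A$. You instead pass through the free object $\Li(A\oplus V)$: you build the derivation $\tilde D$ there (where well-definedness is automatic), lift $g$ to $\tilde g$, and then check explicitly that $\tilde D$ preserves the ideal $I_A$ by computing $\tilde D([x,y]_A - [x,y])$ and invoking the fact that a derivation sends an ideal generated by $S$ into the ideal generated by $S \cup \tilde D(S)$. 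Your approach makes the well-definedness verification completely transparent and structural, whereas the paper's direct formula is shorter but leaves the reader to unpack why the relations in $A$ cause no trouble. The extra ingredient you need---a linear lift $\tilde g$ of $g$ along $\pi$---is harmless but worth noting is where the two arguments diverge; the paper never leaves the quotient.
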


\begin{proof}
	Given $\phi \in \Der_\K^n (A \amalg \Li(V))$ such that $\phi(A) \subseteq A$, then $\phi|_A $  is a degree $n$ derivation of $A$ and $\phi|_V \colon V \to A \amalg \Li(V)$ is a linear map of degree $n$. Conversely, given $g \in \Hom_\K^n(V, A \amalg \Li(V))$ and  $f \in \Der_\K^n(A,A)$, we can define
	$\phi(v)= g(v)$ for all $v \in V$, $\phi(a)=f(a)$ for all $a \in A$, and
	\[\phi([x_1, \dots [x_{n-1}, x_n] \dots ])= \sum_{i=1}^m (-1)^{n(\overline{x_1} + \dots + \overline{x_{i-1}})} [x_1, \dots [\phi(x_i), \dots [x_{n-1}, x_n] \dots ]]. \]
	Notice that a map of this form is not in general well defined, but in this case $\phi$ is because $f$ is already a derivation on $A$.
	
\end{proof}

The lemma implies that if $A$ is a DG-Lie algebra and $V$ a graded vector space, a differential on $A \amalg \Li(V)$ is determined by the differential on $A$ and by a linear map $g \colon V \to A \amalg \Li(V)$ of degree $1$.

\begin{definition}[Free extension]
	A \textit{free extension} in the category of DG-Lie algebras is an inclusion $A \to  A \amalg \Li(V)$, with $(V,d)$ an acyclic complex of vector spaces, and $A \amalg \Li(V)$ the coproduct of DG-Lie algebras. 
\end{definition}

\begin{definition}[Semifree extension]
	
	An \textit{elementary semifree extension} in the category of DG-Lie algebras is a  morphism of DG-Lie algebras $f \colon A \to B$ such that $B$ is isomorphic as a graded Lie algebra  to the coproduct of graded Lie algebras $A \amalg \Li(V)$, with $V$ a graded vector space, in such a way that $f$ is identified with the inclusion $A \to A \amalg \Li(V)$. Moreover, denoting by $d$ the differential induced on $A\amalg \Li(V)$ by said isomorphism, we have $dV \subseteq A$.

	A \textit{semifree extension} is the countable composition of elementary semifree extensions. 
\end{definition}

Every free extension is a semifree extension: let $(V,d)$ be an acyclic complex of vector spaces, let $Z^*(V) \subset V$ be the graded vector spaces of cocycles, and let $V=Z^*(V)\oplus W$ be a splitting of graded vector spaces. Notice that since $V$ is acyclic, $Z^*(V)=B^*(V)= d(V)= d(W)$. The free extension $A \to A \amalg \Li(V)$ is given by the composition of two elementary semifree extensions 
\begin{center}
	\begin{tikzcd}
		A \arrow[r] & A\amalg \Li(Z^*(V)) \arrow[r] & A\amalg \Li(V),
	\end{tikzcd}
\end{center}
where the second map is the inclusion  \begin{center}
	\begin{tikzcd}
		A \amalg \Li(Z^*(V)) \arrow[r] & (A\amalg \Li(Z^*(V)) )\amalg \Li(W) = A \amalg \Li(V)
	\end{tikzcd}
\end{center}
with $d(W) = Z^*(V) \subseteq A\amalg \Li(Z^*(V))$, hence it is an elementary semifree extension.

\begin{theorem}\label{thm.model}
	There is a 
	model structure on the category of unbounded DG-Lie algebras over a field of characteristic zero, where weak equivalences are quasi-isomorphisms, fibrations are surjective morphisms,  cofibrations are retracts of semifree extensions, and trivial cofibrations are  retracts of free extensions.
\end{theorem}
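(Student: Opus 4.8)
The plan is to verify the axioms of a left pre-model structure (Definition~\ref{def.leftpremodel}) and then invoke Theorem~\ref{thm.premodel}, which automatically upgrades it to the desired model structure and identifies the cofibrations and trivial cofibrations as retracts of the chosen subclasses. Concretely, I would set $\sW$ to be the quasi-isomorphisms, $\sF$ the surjective morphisms, $\mathcal{C'}$ the semifree extensions, and $\mathcal{CW'}$ the free extensions. Axioms (L1) and (L2) are immediate: quasi-isomorphisms satisfy 2-out-of-3, and both quasi-isomorphisms and surjections are closed under retracts since these are detected on the underlying complexes of vector spaces, where the corresponding facts are standard. Axiom (L3), the inclusion $\mathcal{CW'} \subset \sW$, amounts to showing every free extension $A \to A \amalg \Li(V)$ with $(V,d)$ acyclic is a quasi-isomorphism; I would prove this using the contraction machinery of Section~\ref{sec.contract}, extending a contraction of $V$ onto its (trivial) cohomology to the free DG-Lie algebra and thence to the coproduct via Remark~\ref{remark.sommadiretta}, so that $A \amalg \Li(V)$ deformation-retracts onto $A$.

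\textbf{The lifting axiom (L4).} This is where the structure of semifree and free extensions is genuinely used. I would first reduce to elementary extensions, since a general semifree (resp.\ free) extension is a countable composition, and lifting properties are stable under composition by Lemma~\ref{lem.comp_lift}. For an elementary semifree extension $A \to A \amalg \Li(V)$ with $dV \subseteq A$, a lifting problem against a surjective quasi-isomorphism $p\colon C \to D$ is solved by Lemma~\ref{lem.derivazamalg}: a morphism out of $A \amalg \Li(V)$ is determined by its restriction to $A$ (already fixed by the square) together with an arbitrary compatible choice of images for a basis of $V$. One must choose, for each generator $v$, a preimage in $C$ of its prescribed image in $D$, compatible with the differential; since $dv \in A$ has an image already lifted, and $p$ is a surjective quasi-isomorphism, one can solve the cocycle-lifting problem degree by degree using surjectivity of $p$ on cocycles and on boundaries (exactness coming from $p \in \sW$). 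For a free extension against a mere surjection $p$, the same bookkeeping works but is easier because $V$ is acyclic, so one has total freedom in choosing boundaries.

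\textbf{The factorisation axiom (L5).} Given any $g\colon A \to B$, I would build the two required factorisations by a generators-and-relations / small-object-style construction adapted to DG-Lie algebras. For (L5)(a) I would factor $g$ through a free extension: adjoin an acyclic free piece $\Li(V)$ large enough that the composite surjects onto $B$ and becomes a quasi-isomorphism is not needed here---rather, for (L5)(a) the middle map must be a free extension (hence in $\mathcal{CW'}$) and the second a surjection in $\sF$, so I adjoin acyclic generators mapping onto all of $B$ to force surjectivity while keeping the extension trivial. For (L5)(b) I would iteratively adjoin semifree generators killing cohomology, producing a semifree extension followed by a surjective quasi-isomorphism; the countable composition of elementary semifree extensions is exactly what the definition of semifree extension permits.

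The hard part will be the factorisation axiom (L5), and within it the organisation of the iterative adjunction of generators so that the countable composition genuinely lands in the class of semifree (resp.\ free) extensions while achieving both surjectivity and the quasi-isomorphism property of the complementary factor. Controlling cohomology in the unbounded setting---ensuring the transfinite/countable process converges and that no cohomology is left over in either direction---is the crux, and it is precisely here that Proposition~\ref{prop.isoH} and the explicit contractions of Section~\ref{sec.contract} do the essential work. By contrast, once (L1)--(L5) are in place, the passage to the model structure and the identification of cofibrations as retracts of semifree extensions and trivial cofibrations as retracts of free extensions are a formal consequence of Theorem~\ref{thm.premodel}.
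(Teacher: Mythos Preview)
Your proposal is correct and follows essentially the same approach as the paper: verify the left pre-model axioms for $(\sW,\sF,\sC',\sC\sW')$ and invoke Theorem~\ref{thm.premodel}. One small correction to your assessment of where the technical weight lies: the contraction machinery of Section~\ref{sec.contract} and Proposition~\ref{prop.isoH} are used for (L3)---showing that a free extension $A\to A\amalg\Li(V)$ is a quasi-isomorphism requires extending a contraction of $V$ through $\Li$ and then checking it descends modulo the ideal $I_A$---whereas the factorisations in (L5) are purely constructive (adjoin acyclic generators to surject, then iteratively adjoin semifree generators to kill relative cohomology) and do not invoke contractions at all.
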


The theorem is proved using the notion of left pre-model structure of Section~\ref{sec.model};
in particular, we set:
\begin{enumerate}
	\item $\sW$ the quasi-isomorphisms,
	\item $\sF$ the surjective maps,
	\item $\sC'$ the semifree extensions,
	\item $\sC\sW'$ the free extensions.
\end{enumerate}

We now show that $\sW, \sF, \sC', \sC\sW'$ satisfy the axioms of Definition~\ref{def.leftpremodel}; the proof is split into the following five 
propositions.
It is clear that quasi-isomorphisms have the 2-out-of-3 property, and that the classes $\sW$ and $\sF$ are closed by retracts, so we begin by proving that $\sC\sW' \subseteq \sW$.

\begin{proposition}
	Free extensions are quasi-isomorphisms.
\end{proposition}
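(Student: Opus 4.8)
The plan is to show that the inclusion $\iota\colon A \to A\amalg\Li(V)$ of a free extension is not merely a quasi-isomorphism but in fact a chain homotopy equivalence, by exhibiting an explicit contraction of $A\amalg\Li(V)$ onto $A$. The strategy is to start from the acyclicity of $(V,d)$, produce a contraction of complexes retracting $A\oplus V$ onto $A$, push it through the functor $\Li$ using Lemma~\ref{lem.contrazione-L}, and then descend the resulting contraction of $\Li(A\oplus V)$ along the quotient presentation $A\amalg\Li(V)\cong\Li(A\oplus V)/I_A$ furnished by Remark~\ref{remark.sommadiretta} (recall that also $\Li(A)/I_A\cong A$).

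First I would build the contraction of complexes. Since $V$ is acyclic it is contractible, and exactly as in the proof of Proposition~\ref{prop.isoH} one may choose $h\in\Hom^{-1}_\K(V,V)$ with $dh+hd=-\Id_V$ and $h^2=0$. Taking the inclusion $\iota\colon A\hookrightarrow A\oplus V$, the projection $\pi\colon A\oplus V\to A$, and the homotopy $H=0\oplus h$ (so that $H|_A=0$) yields a contraction in the sense of Definition~\ref{def.contract}; concretely it is the direct sum of the identity contraction on $A$ with the contraction of $V$ onto $0$. By Lemma~\ref{lem.contrazione-L} this extends canonically to a contraction of free DG-Lie algebras given by $\Li(\iota)\colon\Li(A)\to\Li(A\oplus V)$, $\Li(\pi)\colon\Li(A\oplus V)\to\Li(A)$, and a contracting homotopy $k$.

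The crux is to descend this contraction along the quotients by $I_A$. That $\Li(\iota)$ and $\Li(\pi)$ carry $I_A$ into $I_A$ is immediate, since $\pi|_A=\Id_A$ sends each generator $[x,y]_A-[x,y]$ to one of the same shape; the genuinely nontrivial point is the invariance $k(I_A)\subseteq I_A$. Here I would grade $\Li(A\oplus V)\subseteq T(A\oplus V)$ by the number of tensor factors lying in $V$, i.e. by the $W$-count $b$ appearing in~\eqref{eq.Tab}. Two observations drive the argument: (i) the ideal $I_A$ is generated by elements of $V$-count zero (both $[x,y]_A$ and $[x,y]$ involve only factors from $A$), and since the bracket is additive for this grading, $I_A$ is a graded ideal for the $V$-count; and (ii) the homotopy $k$ preserves the $V$-count, since $h(A)=0$ and $h(V)\subseteq V$, so $k$ merely applies $h$ to one $V$-factor. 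Moreover the coefficients in~\eqref{eq.kappaL} depend only on the $V$-counts, so that identity remains valid for elements homogeneous for the $V$-count (even if not bihomogeneous), after splitting into cohomologically homogeneous pieces.

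With these preparations I would prove $k(I_A)\subseteq I_A$ by induction on bracket length, writing a typical $V$-homogeneous element of $I_A$ as $[z,u]$ with $u\in I_A$ of shorter length and $z\in\Li(A\oplus V)$, both $V$-homogeneous. When both $V$-counts vanish the bracket lies in $\Li(A)$ and $k$ kills it; otherwise~\eqref{eq.kappaL} expresses $k([z,u])$ as a combination of $[k(z),u]$ and $[z,k(u)]$, the first lying in $I_A$ because $I_A$ is an ideal and the second by the inductive hypothesis $k(u)\in I_A$, the base case being $k(\text{generator})=0$. Consequently $k$ descends to a homotopy $\bar k$ on $A\amalg\Li(V)$, and the contraction identities of Definition~\ref{def.contract} pass to the quotient, exhibiting $A$ as a deformation retract of $A\amalg\Li(V)$; in particular $\iota$ is a quasi-isomorphism. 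I expect the main obstacle to be precisely the invariance $k(I_A)\subseteq I_A$: the subtlety is that $I_A$ is not bihomogeneous for the full $(a,b)$-grading of~\eqref{eq.Tab}, since the generator mixes one- and two-fold tensors, so one must isolate the coarser $V$-count grading, for which $I_A$ is homogeneous and the coefficients of~\eqref{eq.kappaL} are constant.
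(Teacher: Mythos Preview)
Your proposal is correct and follows essentially the same route as the paper: build a contraction of $A\oplus V$ onto $A$, push it through $\Li$ via Lemma~\ref{lem.contrazione-L}, and then check that the resulting homotopy $k$ preserves the ideal $I_A$ so that the contraction descends to $A\amalg\Li(V)\cong\Li(A\oplus V)/I_A$. Your emphasis on the $V$-count grading is a slightly cleaner way to organise the same induction the paper carries out directly on the length of the iterated bracket $[x_1,[x_2,\ldots[x_n,u]\ldots]]$; in both arguments the key inputs are that $k$ vanishes on $\Li(A)$ (hence on the generators $[a,b]_A-[a,b]$) and that the coefficients in~\eqref{eq.kappaL} depend only on the $V$-counts $b,q$.
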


\begin{proof}
	Let $A$ be a DG-Lie algebra, $(V,d)$ an acyclic complex of vector spaces, and let $f \colon A \to A \amalg \Li(V)$ be a free extension. 
		If $A=0$, the fact that the free DG-Lie algebra $\Li(V)$ is quasi-isomorphic to $0$ follows from Proposition~\ref{prop.isoH}:  $H^*(\Li(V)) \cong \Li (H^*(V))=0$ since $V$ is acyclic.
		
		Let now $A$ be different from zero;
	by Remark~\ref{remark.sommadiretta}, $A \amalg \Li(V) \cong \Li(A \oplus V)/I_A$, where $I_A$ is the Lie ideal generated by $[a,b]_A -[a,b]$, for all $a,b \in A$. Since $(V,d)$ is an acyclic complex of vector spaces, the identity $Id_V$ is a coboundary in $\Hom^*_{\K}(V,V)$, i.e. there exists $\gamma \in \Hom^{-1}_{\K}(V,V) $ such that $d\gamma+\gamma d=-\Id_V$. 
	We wish to construct a contraction $0\; \rightleftarrows \; V $ as in Definition~\ref{def.contract}; since $\gamma^2$ is not necessarily zero, we define
	$h :=  \gamma d \gamma \in  \Hom^{-1}_{\K}(V,V)$. One can check that $dh + hd = -\Id_V$ still holds, and that additionally $h^2=0$.
	
	We can now extend $h$ to a map $h\colon A \oplus V \to A \oplus V$ by setting $h(A)=0$. Let $p_1 \colon A \oplus V \to A $ be the  projection and $i_1 \colon A \to A\oplus V$ the injection, so that $p_1i_1=\Id_A$; then the diagram
	\[ \xymatrix{A\ar@<.4ex>[r]^-{i_1}&{A \oplus V}\ar@<.4ex>[l]^-{p_1}\ar@(ul,ur)[]^{h}} \]
	is a contraction.  By Lemma~\ref{lem.contrazione-L}, it extends to a contraction 
	\[\xymatrix{\Li(A)\ar@<.4ex>[r]^-{i_1}&{\Li(A \oplus V})\ar@<.4ex>[l]^-{p_1}\ar@(ul,ur)[]^{k}}, \]
	where for brevity we have denoted $\Li(i_1)$ by $i_1$ and $\Li(p_1)$ by $p_1$, and where the map $k$ is such that $k(A)=0$ and
	\begin{equation}\label{eq.kappa2}
		 k([x,y])=  \frac{b}{b+q}[k(x), y] + (-1)^{\overline{x}} \frac{q}{b+q}[x, k(y)],
	\end{equation}
	for $x \in L^{a,b}$ and $y \in L^{p,q}$, see \eqref{eq.kappaL}, where
	 \[L^{a,b} = \Span \{[x_1, [\dots[x_{a+b-1}, x_{a+b}] \dots]]\ |\ a \text{ of the } x_i \ \text{are in } A, b \text{ of the } x_i \ \text{are in } V  \}.\]
	
	We now show that there are induced maps $\tilde{i_1}, \tilde{p_1}$ and $\tilde{k}$ such that the diagram
	\[ \xymatrix{\Li(A)/I_A\ar@<.4ex>[r]^-{\tilde{i_1}}&\Li(A \oplus V)/I_A\ar@<.4ex>[l]^-{\tilde{p_1}}\ar@(ul,ur)[]^{\tilde{k}}} \]
	is a contraction. The map $\tilde{i_1}$ is induced by $i_1$, because it preserves the ideal $I_A$: 
	 for $x,y \in A$ one has $i_1([x,y]_A - [x,y])= i_1([x,y]_A) - [i_1(x), i_1(y)] = [x,y]_A - [x, y] \in I_A$; likewise  $\tilde{p_1}$ is induced by $p_1$.
	The Lie ideal $I_A$ is generated by the vector subspace \[U=\Span \{ [a,b]_A -[a,b]\ |\ a, b \in A \};\]
	we show first that $k$ preserves $U$. 
	In fact, since $a,b$ and $[a,b]_A$ are in $A$, $k(a)=k(b)=k([a,b]_A)=0$,
	and by \eqref{eq.kappa2}, there exist $\alpha, \beta \in \mathbb{Q}$ such that
	\begin{equation}\label{eq.kappa}
		k ([a,b]_A -[a,b])= - k([a,b])= - \alpha [k(a), b] - (-1)^{\overline{a}} \beta[a, k(b)]=0.
	\end{equation}
 An element of the ideal $I_A$ is a linear combination of elements of the form
	\[ [x_1, [x_2, \ldots [x_n , u] \ldots ]], \quad n \in \N, \ \ u \in U;\]
	we prove that  	$k(I_A) \subseteq I_A$ by induction on $n$. The case $n=0$ follows from the fact that $k$ preserves $U$. 
	Assume that $k ([x_1, [x_2, \ldots [x_{n} , u] \ldots ]]) $ belongs to $I_A$, then by Equation~\eqref{eq.kappaL} there exist some  $\alpha', \beta' \in \mathbb{Q}$  such that
	\[ k ([x_0, [x_1, \ldots [x_{n} , u] \ldots ]]) = \alpha'  [k(x_0), [x_1, \ldots [x_{n} , u] \ldots ]] + \beta' [x_0, k([x_1, \ldots [x_{n} , u] \ldots ])]. \]
	Note that $ [x_1, \ldots [x_{n} , u] \ldots ]]$ belongs to the ideal $I_A$, and hence so does $\  [k(x_0), [x_1, \ldots [x_{n} , u] \ldots ]]$.
	  By the inductive hypothesis,  $k([x_1, \ldots [x_{n} , u] \ldots ])$ belongs to the ideal $I_A$, and therefore also  {$[x_0, k([x_1, \ldots [x_{n} , u] \ldots ])]$} belongs to $I_A$.

	Finally, since $\Li(A)/I_A \cong A$ and $\Li(A \oplus V)/I_A \cong A \amalg \Li(V)$ in such a way that $f$ is identified with the inclusion $\tilde{i_1}$, we have a contraction 
	\[ \xymatrix{A\ar@<.4ex>[r]^-{f}& {A \amalg \Li(V)},\ar@<.4ex>[l]^{}\ar@(ul,ur)[]^k}\]
	therefore $f\colon A \to A \amalg \Li(V)$ is a quasi-isomorphism.
\end{proof}

\begin{proposition}\label{prop.lifting1}
	For every commutative diagram of DG-Lie algebras
	\begin{center}
		\begin{tikzcd}
			A \arrow[d, "i"'] \arrow[r ]                        & C \arrow[d, "g"] \\
		B \arrow[r] \arrow[ru, "h", dotted] & D,               
		\end{tikzcd}
	\end{center}
	where $i$ is a semifree extension and $g$ is a surjective quasi-isomorphism, there exists a lifting $h\colon A \amalg \Li(V) \to C$ making both triangles commute.
\end{proposition}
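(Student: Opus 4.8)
The plan is to reduce to an elementary semifree extension and then build the lift one generator at a time. Since a semifree extension is by definition a countable composition of elementary semifree extensions, write $i$ as $A = A_0 \to A_1 \to A_2 \to \cdots$ with $B = \colim_n A_n$ and each $A_n \to A_{n+1}$ elementary. Denote by $\phi\colon A \to C$ and $\psi\colon B \to D$ the two given maps, so that $g\phi = \psi|_A$. I would construct $h$ as the colimit of a compatible tower of DG-Lie maps $h_n\colon A_n \to C$ with $h_0 = \phi$, $h_{n+1}|_{A_n} = h_n$, and $g h_n = \psi|_{A_n}$; passing to the colimit then yields $h\colon B \to C$ with $h|_A = \phi$ and $gh = \psi$. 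Each inductive step is a lifting problem against the elementary semifree extension $A_n \to A_{n+1}$, so it suffices to solve the case where $i$ itself is elementary, say $A \to A \amalg \Li(V)$ with $dV \subseteq A$.

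For the elementary case, I would exploit that $A \amalg \Li(V)$ is a coproduct with $\Li(V)$ free: a DG-Lie morphism $h\colon A \amalg \Li(V) \to C$ restricting to $\phi$ on $A$ is determined by an arbitrary degree-zero linear map $h|_V\colon V \to C$, subject only to compatibility with the differential. Because $dV \subseteq A$, for $v \in V$ the compatibility reads $d_C h(v) = \phi(dv)$, and the condition $gh = \psi$ reads $g h(v) = \psi(v)$. Thus, fixing a homogeneous basis of $V$, the whole problem reduces to finding, for each basis vector $v$, an element $c \in C$ with
\[
d_C c = \phi(dv), \qquad g(c) = \psi(v).
\]

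Solving this is where the hypotheses on $g$ enter. First one checks the problem is consistent: $\phi(dv)$ is a cocycle since $\phi$ is a chain map and $d^2 = 0$, and $g(\phi(dv)) = \psi(dv) = d_D\psi(v)$ using $g\phi = \psi|_A$ together with the fact that $\psi$ is a chain map. Now set $K = \ker g$; since $g$ is a surjective quasi-isomorphism, the long exact sequence of $0 \to K \to C \to D \to 0$ shows that $K$ is acyclic. Choosing $c_0 \in C$ with $g(c_0) = \psi(v)$ (possible as $g$ is surjective), the element $d_C c_0 - \phi(dv)$ lies in $K$ and is a cocycle, hence equals $d_C\kappa$ for some $\kappa \in K$; then $c := c_0 - \kappa$ satisfies both equations.

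The main obstacle, and the conceptual point of the argument, is that the semifree condition $dV \subseteq A$ is exactly what guarantees that $\phi(dv)$ is a cocycle already determined by the previously constructed data, so that the generator-by-generator lifting never circles back on the new generators; this is precisely where Quillen's more general free maps would fail in the unbounded setting. The only routine bookkeeping is verifying that the inductive hypothesis $g h_n = \psi|_{A_n}$ is preserved at each stage and that the colimit map is well defined, both of which follow immediately from the compatibility $h_{n+1}|_{A_n} = h_n$.
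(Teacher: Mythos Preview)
Your argument is correct and follows essentially the same approach as the paper: reduce to the elementary case by composing lifts along the tower, then for each generator $v$ use that $g$ is a surjective quasi-isomorphism to solve the two equations $d_C c=\phi(dv)$ and $g(c)=\psi(v)$. The only cosmetic difference is the order of the two corrections---the paper first finds $c_i$ with $dc_i=\phi(dv_i)$ (using injectivity of $H^*(g)$) and then adjusts by a cocycle to fix $g(c_i)=\psi(v_i)$ (using that a surjective quasi-isomorphism is surjective on cocycles), whereas you first lift $\psi(v)$ and then adjust inside the acyclic kernel $K=\ker g$; these are equivalent repackagings of the same facts about $g$.
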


\begin{proof}
	By Lemma~\ref{lem.comp_lift} we can suppose without any loss of generality that $i$ is an elementary semifree extension,  $i \colon A \to A \amalg \Li(V)$, with  $d(V) \subseteq A$, and consider a diagram of the form
		\begin{center}
		\begin{tikzcd}
			A \arrow[d, "i"'] \arrow[r, "\gamma"]                        & C \arrow[d, "g"] \\
			 A \amalg \Li(V) \arrow[r, "\beta"'] \arrow[ru, "h", dotted] & D.           
		\end{tikzcd}
	\end{center}
	To define a lifting $h\colon A \amalg \Li(V) \to C$ we need only define a graded linear map ${k}\colon V \to C$ such that the map $h$ induced by universal properties of the free graded Lie algebra and of the coproduct of graded Lie algebras has the required properties. 
	\begin{center}
		\begin{tikzcd}
			A \arrow[r, "i"] \arrow[rd, "\gamma"', bend right=20] & A \amalg \Li(V) \arrow[d, "\exists ! h",] & \Li(V) \arrow[l] \arrow[ld, "k", bend left=20] \\
			& C                                       &                                            
		\end{tikzcd}
	\end{center}
	Let $\{v_i\} $ be generators of the graded vector space $V$. By definition of elementary semifree extension the elements $dv_i$ belong to $A$, so the only possible definition for the lifting is $h(dv_i)= \gamma(dv_i)$. Then $dh(dv_i)=0$, and $gh(dv_i)=g\gamma(dv_i)=\beta(dv_i)= d\beta(v_i)$. Since $g$ is a quasi-isomorphism the $h(dv_i)$ are exact in $C$, so there exist $c_i \in C$ such that $h(dv_i)=dc_i$. A surjective quasi-isomorphism is surjective on cocycles: for any $x \in D$ such that $dx=0$ there exists $y \in C$ such that $dy=0$ and $g([y]) =[x]$, so that $ g(y)= x + dz,$ with $z \in D$. By the surjectivity, $z= g(t)$, so that $x= g(y) - dg(t) = g(y-dt)$.
	Therefore, since \[d (g(c_i) -\beta(v_i))= g(dc_i)-\beta(dv_i)=gh(dv_i)-gh(dv_i)=0\] there exist $c_i' \in Z^*(C)$ such that $\beta(v_i)= g(c_i + c_i')$. We then set $h(v_i):= c_i + c_i'$, which is the required lifting, as $gh(v_i)= \beta(v_i)$ and $dh(v_i)= dc_i + dc_i' =dc_i= h(dv_i)$. 
\end{proof}

\begin{proposition}
	For every commutative diagram of DG-Lie algebras
	\begin{center}
		\begin{tikzcd}
			A \arrow[d, "i"'] \arrow[r, "\gamma"]                        & C \arrow[d, "g"] \\
			A \amalg \Li(V) \arrow[r, "\beta"'] \arrow[ru, "h", dotted] & D,               
		\end{tikzcd}
	\end{center}
	where $i$ is a free extension and $g$ is surjective, there exists a lifting $h\colon A \amalg \Li(V) \to C$ making both triangles commute.
\end{proposition}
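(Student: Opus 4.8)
The plan is to reduce the construction of $h$ to lifting a single graded linear map through $g$, and then to exploit the acyclicity of $V$ to obtain compatibility with the differentials. Exactly as in the proof of Proposition~\ref{prop.lifting1}, by the universal properties of $\Li$ and of the coproduct of graded Lie algebras, a DG-Lie lifting $h\colon A\amalg\Li(V)\to C$ with $hi=\gamma$ is the same datum as a graded linear map $k\colon V\to C$ subject to two requirements: that $gk=\beta|_V$ (which forces $gh=\beta$), and that $k$ intertwines the differentials, $k(dv)=d\,k(v)$ for all $v\in V$. Since $\gamma$ is already a DG-Lie morphism and $h$ is a map of graded Lie algebras, the latter condition on generators is precisely what makes $h$ commute with the differentials; note that here $dv\in V$ because $i$ is a \emph{free} extension, so $k(dv)$ makes sense.

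To produce such a $k$ I would use the splitting already appearing in the observation that every free extension is semifree. Write $V=Z^*(V)\oplus W$, where $Z^*(V)$ is the space of cocycles; since $(V,d)$ is acyclic one has $Z^*(V)=B^*(V)=d(W)$, and $d$ restricts to an isomorphism $d\colon W\xrightarrow{\ \sim\ }Z^*(V)$. Choosing a basis $\{w_i\}$ of $W$, the surjectivity of $g$ lets me select $c_i\in C$ with $g(c_i)=\beta(w_i)$, and I set $k(w_i):=c_i$. On the cocycles I have no freedom left: I must set $k(dw_i):=d\,k(w_i)=dc_i$, which is well defined precisely because $d|_W$ is an isomorphism onto $Z^*(V)$.

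It then remains to verify that this $k$ does the job. The intertwining condition $k(dv)=d\,k(v)$ holds on $W$ by construction, and on $Z^*(V)$ it reads $0=k(d(dw_i))=d\,k(dw_i)=d^2k(w_i)=0$, hence is automatic. For compatibility with $g$: on $W$ one has $g\,k(w_i)=g(c_i)=\beta(w_i)$, while on $Z^*(V)$, using that both $g$ and $\beta$ are DG-morphisms, $g\,k(dw_i)=g\,d\,c_i=d\,g(c_i)=d\,\beta(w_i)=\beta(dw_i)$. Thus $gk=\beta|_V$, and the morphism $h$ induced by $\gamma$ and $k$ is the desired lifting.

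The one point requiring care --- and the conceptual heart of the statement --- is that $g$ is now only assumed surjective, not a surjective quasi-isomorphism as in Proposition~\ref{prop.lifting1}. Consequently $g$ need not be surjective on cocycles, so the cocycle generators of $V$ cannot simply be lifted to cocycles of $C$ as was done there. The acyclicity of $V$ is exactly what rescues the argument: it permits me to lift only the complementary generators $W$ arbitrarily and then let the differential dictate the values on the cocycles $Z^*(V)=d(W)$, so that $k$ is automatically a chain map and its values on $Z^*(V)$ land in coboundaries, with no surjectivity-on-cocycles hypothesis needed.
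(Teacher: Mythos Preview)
Your proof is correct and follows essentially the same approach as the paper: both reduce to defining a chain map $V\to C$ via the splitting $V=Z^*(V)\oplus W$, lift the $W$-generators arbitrarily using surjectivity of $g$, and then force the values on $Z^*(V)=d(W)$ by compatibility with the differential. Your verification is a bit more explicit and the final explanatory paragraph is a nice addition, but the argument is the same.
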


\begin{proof}
 It is enough to define a map of complexes $h \colon V \to C$: in fact, by the universal property of the free DG-Lie algebra this extends to a DG-Lie morphism $h \colon \Li(V) \to C$, and then to a DG-Lie morphism  $h \colon A \amalg \Li(V) \to C$ by the universal property of the coproduct. 
 
 	Since $V$ is acyclic, there is a splitting of graded vector spaces $V= Z^*(V) \oplus W$ with $d(V)=d(W)=B^*(V)=Z^*(V)$.
	We start by defining a graded linear map $W \to C$. Taking generators $w_i \in W$ we set $h(w_i):= c_i$, where $c_i \in C$ are elements such that $g(c_i)=\beta(w_i)$, which exist because of the surjectivity of $g$. As remarked before, every $z \in Z^*(V)$ is a coboundary, so $z=dx$ with $x \in W$: we then set $h(z)=h(dx):= dh(x)$, so that $h$ commutes with differentials. The induced map $h \colon A \amalg \Li(V) \to C$ is such that $hi=\gamma $ and $gh=\beta$.
\end{proof}

\begin{proposition}
	Every morphism $f\colon A \to B$ of DG-Lie algebras can be factored as a free extension followed by a surjective morphism.
\end{proposition}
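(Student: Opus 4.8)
The plan is to reduce the statement to an elementary fact about complexes of vector spaces: it suffices to produce an \emph{acyclic} complex $(V,d)$ together with a \emph{surjective} morphism of complexes $q\colon V\to B$. Granting this, the inclusion $i\colon A\to A\amalg\Li(V)$ is by definition a free extension. By the universal property of $\Li$ together with Remark~\ref{rem.Fd=dF}, the chain map $q$ extends to a morphism of DG-Lie algebras $\Li(V)\to B$; combining it with $f\colon A\to B$ via the universal property of the coproduct produces a unique morphism of DG-Lie algebras $p\colon A\amalg\Li(V)\to B$ with $p|_A=f$ and $p|_V=q$. Then $f=p\circ i$, and since $\image(p)\supseteq q(V)=B$ the map $p$ is surjective. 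Thus $f$ is a free extension followed by a surjection, as required.

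It remains to construct the pair $(V,q)$. The naive attempt --- adjoining for each homogeneous $b\in B$ a single generator mapping to $b$ --- fails, because a general complex is far from acyclic and one cannot force acyclicity by prescribing the differentials of generators unless the targeted elements are already coboundaries. The correct device is the mapping cone of the identity of $B$. Concretely, I would set $V^n=B^{n-1}\oplus B^n$ with differential $d_V(b',b)=(-d_Bb'-b,\,d_Bb)$ and let $q\colon V\to B$ be the projection $q(b',b)=b$. One checks directly that $d_V^2=0$, that $q$ is a morphism of complexes --- indeed $q\,d_V(b',b)=d_Bb=d_B\,q(b',b)$ --- which is clearly surjective in every degree, and that $V$ is acyclic: the degree $-1$ map $s(b',b)=(0,-b')$ satisfies $d_Vs+sd_V=\Id_V$, so $V$ is in fact contractible and $H^*(V)=0$.

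The only genuinely delicate point is this construction of $(V,q)$: the signs in $d_V$ must be chosen so that $d_V^2=0$, the complex is acyclic, and the projection onto the degree-preserving copy of $B$ is a chain map, all simultaneously. Once these are verified by the short computation indicated above, the remainder of the argument is entirely formal, relying only on the universal properties of $\Li$ and of the coproduct (recorded in Remark~\ref{rem.Fd=dF} and Definition~\ref{def.coprod_dgla}) together with the trivial observation that the surjectivity of $q$ propagates to $p$.
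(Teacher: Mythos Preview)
Your proof is correct and follows the same strategy as the paper: reduce to constructing an acyclic complex $(V,d)$ together with a surjective chain map $V\to B$, then assemble the factorisation from the universal properties of $\Li$ and of the coproduct. The only difference lies in the choice of $V$. The paper sets $V^{i-1}=B^{i-1}\oplus\K\langle c^i_j\rangle$, adjoining a formal antiderivative $c^i_j$ for each chosen cocycle $b^i_j$ whose class helps generate $H^i(B)$; you instead take the cone of the identity of $B$. Your construction is arguably cleaner---the chain-map condition on $q$ and the contractibility of $V$ drop out of the standard cone formulas with no further choices---whereas the paper's version is slightly smaller (one only adjoins generators indexed by cohomology rather than a full shifted copy of $B$). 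Either way, $V$ depends only on the target $B$, and the formal remainder of the argument (extending $q$ to $\Li(V)\to B$, combining with $f$ via the coproduct, and reading off surjectivity of $p$ from that of $q$) is identical.
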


\begin{center}
	\begin{tikzcd}
		& A\amalg \Li(V) \arrow[rd, "g", bend left=20] &   \\
		A \arrow[rr, "f"] \arrow[ru, "i", bend left=20] &                                            & B
	\end{tikzcd}
\end{center}

\begin{proof}
	We construct an acyclic complex of vector spaces $(V,d)$ such that there is a surjective map of complexes $V \to B$, which induces a surjective morphism of DG-Lie algebras $\Li(V) \to B$,
 and hence a surjective morphism of DG-Lie algebras
	$g\colon A\amalg \Li(V) \to B$ such that $gi=f$. 

	Let $\{b^i_j\}$ be elements of $B^i$ whose cohomology classes generate $H^i(B)$. For every $b^i_j$, let $c^i_j$ be a new element of degree $i-1$, and set $V^{i-1}:= B^{i-1} \oplus \K \langle c^i_j \rangle$,  $dc^i_j=b^i_j$. There is a surjection $V \to B$, and $V$ is by construction acyclic.
\end{proof}

\begin{proposition}
	Every morphism $f\colon L \to M$ of DG-Lie algebras can be factored as a semifree extension followed by a surjective quasi-isomorphism.
\end{proposition}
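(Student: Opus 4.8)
The plan is to factor $f$ as $L \xrightarrow{i} P \xrightarrow{g} M$ in two phases, building $P$ from $L$ as a countable composition of elementary semifree extensions and arranging $g$ to be a surjective quasi-isomorphism. In the first phase I would invoke the previous proposition to factor $f$ as a free extension $i_0 \colon L \to \tilde{P}$ followed by a surjection $\tilde{g} \colon \tilde{P} \twoheadrightarrow M$; recall that, as noted before Theorem~\ref{thm.model}, every free extension is a semifree extension. This secures surjectivity once and for all, since in the second phase I will only enlarge $\tilde{P}$, so that the image of $g$ can only grow. It remains to correct the cohomology, which is the content of the second phase.

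In the second phase I would construct an increasing sequence $\tilde{P} = Q_0 \hookrightarrow Q_1 \hookrightarrow \cdots$ with $P := \colim_n Q_n$ and compatible maps $g_n \colon Q_n \to M$, $g_0 = \tilde g$, where each $Q_n \to Q_{n+1}$ is an elementary semifree extension $Q_{n+1} = Q_n \amalg \Li(V_{n+1})$ with $dV_{n+1} \subseteq Q_n$; by Lemma~\ref{lem.derivazamalg} such a differential is well defined. The first step $Q_0 \to Q_1$ adjoins, for every element of a basis of $H^*(M)$, a cocycle generator $v$ (so $dv = 0 \in Q_0$) with $g_1(v)$ a chosen cocycle representative; this makes $H^*(g)$ surjective permanently, because the image of such a class $[v]$ in $H^*(M)$ is nonzero and therefore cannot become a coboundary at any later stage. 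For $n \geq 1$ the step $Q_n \to Q_{n+1}$ adjoins, for each element of a basis of $\ker H^*(g_n)$ with cocycle representative $z \in Q_n$ and $g_n(z) = dm$, a generator $v$ with $dv = z \in Q_n$ and $g_{n+1}(v) = m$. In either case the prescription satisfies $g_{n+1}(dv) = d\,g_{n+1}(v)$ on generators; since $g_{n+1}$ is a Lie-algebra map, commuting with the differential on generators forces it to commute everywhere, so $g_{n+1}$ extends $g_n$ to a morphism of DG-Lie algebras.

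Passing to the colimit, $i \colon L \to P$ is the composition of the free extension $i_0$ with the countable composition of elementary semifree extensions $\tilde P \to P$, hence a semifree extension, and $g = \colim_n g_n$ is surjective. To see that $g$ is a quasi-isomorphism I would use that cohomology commutes with this sequential colimit, $H^*(P) \cong \colim_n H^*(Q_n)$, since every cocycle and every coboundary of $P$ already lies in some $Q_n$. Surjectivity of $H^*(g)$ holds from $Q_1$ onwards by construction. For injectivity, if $\zeta \in P$ is a cocycle with $g(\zeta)$ exact, then $\zeta \in Q_n$ for some $n \geq 1$, so $[\zeta] \in \ker H^*(g_n)$; writing $[\zeta]$ in terms of the chosen basis of $\ker H^*(g_n)$ and using that each basis class is made exact in $Q_{n+1}$ by the adjoined generator with $dv = z$, one obtains $[\zeta] = 0$ in $H^*(Q_{n+1})$, hence in $H^*(P)$.

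The step I expect to be the main obstacle is precisely this verification that $g$ is a quasi-isomorphism in the limit: one must confirm that the kernel-killing generators (those with $dv = z$) do not destroy the surjectivity on cohomology already achieved, and that the iterative killing genuinely converges, in the sense that every offending cocycle appears at a finite stage and is eliminated one stage later, which is exactly where the direct-limit description of $H^*(P)$ is essential. The remaining bookkeeping, namely that every adjoined generator has differential landing in the previous stage so that each extension is genuinely \emph{elementary} semifree, and that $g_{n+1}$ is a well-defined chain map via Lemma~\ref{lem.derivazamalg}, is routine.
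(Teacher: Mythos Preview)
Your proposal is correct and follows essentially the same strategy as the paper: first invoke the previous proposition to obtain a surjection, then build a tower of elementary semifree extensions, adjoining cocycles at stage~$1$ to secure surjectivity on cohomology and thereafter adjoining generators with prescribed differential to kill $\ker H^*(g_n)$ stage by stage. The only differences are cosmetic bookkeeping: the paper adjoins all of $Z^*(M)$ at stage~$1$ (so that $f_1$ is surjective on cocycles, not merely on cohomology) and at later stages works with generators of $f_n^{-1}(B^*(M))\cap Z^*(C_n)$ rather than a basis of $\ker H^*(g_n)$, but the two formulations are interchangeable. Your discussion of why surjectivity on cohomology persists and why the colimit argument gives injectivity is in fact more explicit than the paper's, which simply records properties (1)--(4) and leaves the verification that $\tilde f$ is a quasi-isomorphism to the reader.
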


\begin{proof}
	Using the previous proposition, factor $f$ as
	\begin{center}
		\begin{tikzcd}
			L \arrow[r, "j"] & C \arrow[r, "g", two heads] & M,
		\end{tikzcd}
	\end{center}
	with $j$ a free extension and $g$ a surjective map. Since a free extension is in particular a semifree extension, and semifree extensions are closed by composition by definition, it is sufficient to factor the surjective map $g\colon C \to M$ as a semifree extension followed by a quasi-isomorphism.
	We construct a sequence of DG-Lie algebras 
	and a sequence of morphisms $f_n \colon C_n \to M$
	\begin{center}
		\begin{tikzcd}
			C=C_0 \arrow[d, "g=f_0"', two heads] \arrow[r, "i_o", hook] & C_1 \arrow[r, "i_1", hook] \arrow[ld, "f_1", near start, bend left=10] & C_2 \arrow[r, hook] \arrow[lld, "f_2", near start, bend left=15] & \cdots \arrow[r, "i_{n-1}", hook] & C_n \arrow[r, "i_n"] \arrow[lllld, "f_n",near start, bend left=15] & \cdots \\
			M & & & & &       
		\end{tikzcd}
	\end{center} such that:
	\begin{enumerate}
		\item $C_n = C_{n-1} \amalg \Li(V_n)$ as a coproduct of graded Lie algebras, with $d(V_n) \subseteq C_{n-1}$, \\ $i_{n-1} \colon C_{n-1} \to C_n$ is the natural inclusion in the coproduct;
		\item $f_{n}$ extends $f_{n-1}$;
		\item $f_1 \colon Z^{*}(C_1) \to Z^*(M)$ is surjective;
		\item $f_n^{-1}(B^*(M)) \cap Z^* (C_n) \subseteq B^*(C_{n+1}) \cap C_n, \ \forall n >0$
	\end{enumerate}
	Setting $\tilde{C}= \colim_n C_n$, $\tilde{f}= \colim_n f_n$ and $i \colon C \to \tilde{C}$ the natural inclusion, we have that 
	\begin{center}
		\begin{tikzcd}
			C \arrow[r, "i"] & \tilde{C} \arrow[r, "\tilde{f}"] & M
		\end{tikzcd}
	\end{center}
	is a factorisation with the required properties. Note that by the surjectivity of $f_0=g$ all the maps $f_n$ are surjective, and thus so is $\tilde{f}$.
	
	We begin by setting $C_0=C$, $f_0=g$, and $V_1= Z^{*}(M)$, $d(V_1)=0$. The inclusion $h_1 \colon V_1 \to M$ induces the DG-Lie algebra map $f_1 \colon C_1:= C \amalg \Li(V_1) \to M$, which is surjective on cocycles by construction. For the next step, let $w_j$ be generators of $f_1^{-1}(B^*(M)) \cap Z^*(C_1)$, so that $dw_j=0$ and $f_1(w_j)= dn_j$. Set $V_2 =\K\langle y_j\rangle $, $dy_j= w_j \in C_1$ and $h_2 \colon V_2 \to M$, $h_2(y_j)= n_j$, then $h_2$ induces the DG-Lie algebra map $f_2\colon C_2= C_1 \amalg \Li(V_2) \to M$ with the required properties.
	
	Assume now that we have constructed $C_{n}$ and $f_{n}\colon C_{n} \to M$, and let $v_i$ be a set of generators of $f_n^{-1}(B^*(M)) \cap Z^* (C_n)$, so that $dv_i=0$ and $f_n(v_i)=dm_i$. Define $V_{n+1}= \K \langle x_i \rangle$, $dx_i= v_i \in C_n$, and $h_{n+1} \colon V_{n+1} \to M$ as $h_{n+1}(x_i)= m_i$. Define $C_{n+1} = C_{n} \amalg \Li(V_{n+1})$, and $f_{n+1}$ as the natural DG-Lie algebra map induced by $h_{n+1}$.
\end{proof}

%

\begin{remark}
	Consider the category of bounded above DG-Lie algebras  $\mathbf{DGLA}^{< 0 }_{\mathbb{K}}$, and the faithful inclusion functor
	\[ \iota \colon  \mathbf{DGLA}^{<0}_{\mathbb{K}} \to  \mathbf{DGLA}_{\mathbb{K}}.\]
	The canonical truncation of an unbounded DG-Lie algebra $L, $defined as
	\[ \tau (L)^i = \begin{cases}
		L^i\quad &\text{if}\ \   i < -1,\\
		\ker(d^r) \quad &\text{if}\ \  i=-1,\\
		0\quad &\text{if}\ \ i >-1
	\end{cases}\]
gives a a functor $\tau \colon  \mathbf{DGLA}_{\mathbb{K}} \to \mathbf{DGLA}^{< 0 }_{\mathbb{K}}$ which is right adjoint to $\iota$.
We show here that 
{by applying $\tau$ one obtains exactly the model structure on  $\mathbf{DGLA}^{< 0 }_{\mathbb{K}}$ considered in \cite{QuiR}}. 
In this model structure, the fibrations -- which we will denote by  $\sF^{<0}$ -- are the maps surjective in all degrees $i <-1$ and the weak equivalences -- which we will denote by $\sW^{<0}$ -- are the quasi-isomorphisms. Cofibrations, which we denote by $\sC^{<0}$, are defined in accordance to Lemma~\ref{lem.liftingdefinition} as the maps with the left lifting property with respect to trivial fibrations. 

In fact, $\tau$ applied to a fibration in $\mathbf{DGLA}_{\mathbb{K}}$, i.e., to a surjective map, gives exactly a map which is surjective in all degrees $ i < -1$. 
Denote as usual by $\sF, \sC$ and $\sW$ the fibrations, cofibrations and weak equivalences of the model structure on $\mathbf{DGLA}$. We then have that 
\[ \iota ( \sF^{<0} \cap \sW^{<0}) \subseteq \sF \cap \sW:\]
 any quasi-isomorphism $f \colon L \to M$ in $\mathbf{DGLA}^{<0}_{\mathbb{K}}$ which is surjective in degrees $i < -1$ is also surjective in degree $-1$. In fact, for any $x \in M^{-1}$ one has $dx=0$, so by the fact that $f$ is a quasi-isomorphism there exists $y \in L^{-1}$ such that $f(y)= x + dz$, with $z \in M^{-2}$. Since $f$ is surjective in degree $-2$, there exists $t \in L^{-2}$ such that $f(t)=z$, hence $x= f(y)-d f(t)= f (y-dt)$.  By the fact that $\tau \circ \iota$ is the identity one obtains the inclusion $ \sF^{< 0} \cap \sW^{<0} \subseteq \tau(\sF \cap \sW)$. The other inclusion  $\tau (\sF \cap \sW) \subseteq \sF^{<0} \cap \sW^{<0}$ is clear, so that $	\tau (\sF \cap \sW) = \sF^{<0} \cap \sW^{<0}$ . 

The maps in $\sC^{<0}$ are by definition those with the left lifting property with regards to the maps in $\sF^{<0} \cap \sW^{<0} = \tau (\sF \cap \sW)$. By the adjointness of $\tau$ and $\iota$, this means that the maps in $\iota (\sC^{<0})$ are exactly those with the left lifting property with regards to the maps in $\sF \cap \sW$, so again by Lemma~\ref{lem.liftingdefinition} they coincide with the cofibrations in $\mathbf{DGLA}$. Applying $\tau$ to the identity $\iota (\sC^{<0 })= \sC$, we conclude that $\sC^{< 0} = \tau(\sC)$, so that the functors $\iota$ and $\tau$ preserve all the model structure. 
\end{remark}

\section{Examples}\label{sec.ex}

This section contains an example of a DG-Lie algebra which is not cofibrant, in spite of the underlying graded Lie algebra being free, and the proof that the cobar construction of a locally conilpotent cocommutative coalgebra is a semifree DG-Lie algebra.

In the first part of the section we assume that the reader has a basic knowledge of deformation functors associated to differential graded Lie algebras: we refer for instance to  \cite{ManettiSeattle, LMDT} for a detailed treatment of the topic.

\subsection{A free graded Lie algebra which is not cofibrant}

In the model structure on DG-Lie algebras living in degrees $i < 0$ and $i \leq 0$
all DG-Lie algebras whose underlying graded Lie algebra  is free are cofibrant: see \cite[Proposition 5.5]{QuiR} and \cite[Proposition 5.7]{Nei} respectively. This example  shows that this is not true in the unbounded case.

Let $t$ be an indeterminate of degree $1$. Consider the free graded Lie algebra $\Li \langle t\rangle  $  generated by $t$: as a graded vector space it is generated by $t$ and by $[t,t]$, because $[t,[t,t]]=0$ by the Bianchi identity.
Since the differential of $t$ has to be of degree two, either $dt=0$ or $dt= \lambda [t,t]$ for some $0 \neq \lambda\in \K$.
Consider the case where the differential of $t$ is not zero: by rescaling $t$ we can suppose without loss of generality that $dt= -\frac{1}{2}[t,t]$, i.e., that $t$ is a Maurer-Cartan element. In fact, if we set $t':= -2 \lambda t$, then
\[ dt' = -2 \lambda dt= -2 \lambda^2 [t,t]= -2 \lambda^2 \left[-\frac{1}{ 2 \lambda}t', -\frac{1}{ 2 \lambda}t'\right]= - \frac{1}{2}[t',t'].\]

\begin{proposition}
	The free graded Lie algebra generated by one indeterminate in degree $1$ is cofibrant if and only if its differential is trivial. 
\end{proposition}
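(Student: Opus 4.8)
The plan is to prove both implications by translating cofibrancy of $\Li\langle t\rangle$ into a statement about Maurer--Cartan elements. Observe first that a morphism of DG-Lie algebras $\Li\langle t\rangle \to L$ is the same datum as a degree $1$ element $x=\phi(t)\in L^1$ satisfying $dx=-\frac{1}{2}[x,x]$, i.e.\ an element of $\MC(L)$; thus $\Li\langle t\rangle$ corepresents the Maurer--Cartan functor. Since by Lemma~\ref{lem.liftingdefinition} the object $\Li\langle t\rangle$ is cofibrant exactly when $0\to\Li\langle t\rangle$ has the left lifting property against every trivial fibration, and a lift in the relevant square is precisely a preimage in $\MC(C)$ of a given element of $\MC(D)$, cofibrancy of $\Li\langle t\rangle$ is equivalent to the assertion that $\MC(g)\colon\MC(C)\to\MC(D)$ is surjective for every surjective quasi-isomorphism $g\colon C\to D$.

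For the trivial-differential direction this is immediate, and one need not even invoke Maurer--Cartan elements: when $dt=0$ the algebra $\Li\langle t\rangle$ equals $\Li(V)$ for $V=\K\langle t\rangle$ concentrated in degree $1$ with zero differential, so the initial map $0\to\Li\langle t\rangle$ is the elementary semifree extension $0\to 0\amalg\Li(V)$ (the condition $dV\subseteq 0$ holding trivially). A semifree extension is a cofibration, hence $\Li\langle t\rangle$ is cofibrant.

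For the converse I would exhibit a single trivial fibration that fails to be surjective on Maurer--Cartan sets, taking $dt=-\frac{1}{2}[t,t]$ after rescaling. Let $V'$ be the acyclic complex with basis $a$ in degree $1$, $b$ in degree $2$ and $da=b$, and define $g\colon\Li(V')\to\Li\langle t\rangle$ as the DG-Lie extension of the chain map $a\mapsto t$, $b\mapsto dt=-\frac{1}{2}[t,t]$. Then $g$ is surjective, and it is a quasi-isomorphism because both $\Li(V')$ and $\Li\langle t\rangle$ are acyclic: the former by Proposition~\ref{prop.isoH}, since $H^*(V')=0$; the latter by inspection, as its only nonzero differential $t\mapsto-\frac{1}{2}[t,t]$ is an isomorphism $\K\langle t\rangle\to\K\langle[t,t]\rangle$. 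Applying the lifting criterion to the square whose bottom edge is $\Id\colon\Li\langle t\rangle\to\Li\langle t\rangle$, a lift would be a section of $g$, i.e.\ a Maurer--Cartan element $\tilde t\in\Li(V')^1$ with $g(\tilde t)=t$. But every iterated bracket of $a$ and $b$ has internal degree at least $2$, so $\Li(V')^1=\K\langle a\rangle$ and necessarily $\tilde t=a$; and $a$ is not Maurer--Cartan, since $da=b$ whereas $b$ and $[a,a]$ are linearly independent in the free graded Lie algebra, so $da\neq-\frac{1}{2}[a,a]$. Hence $g$ admits no section and $\Li\langle t\rangle$ is not cofibrant.

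The crux of the argument---and the reason the statement holds for trivial differential but fails otherwise---is the nonlinearity of the Maurer--Cartan equation. A surjective quasi-isomorphism is always surjective on cocycles (as is used repeatedly in Section~\ref{sec.modeldgla}), which would suffice were $\MC$ the linear condition $dx=0$; the obstruction is exactly the quadratic term, concretely the fact that $-\frac{1}{2}[a,a]$ cannot be matched with the independent generator $b=da$. I expect the only points requiring care to be the verification that $g$ is a quasi-isomorphism and the bookkeeping that $\Li(V')^1$ is one-dimensional; the failure of a section then reduces to a single degree count.
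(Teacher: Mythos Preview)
Your proof is correct. The trivial-differential direction matches the paper exactly. For the nontrivial direction you take a genuinely different route: the paper argues that if $\Li\langle t\rangle$ were cofibrant then, being acyclic, $0\to\Li\langle t\rangle$ would be a \emph{trivial} cofibration and hence lift against all fibrations $L\otimes\mathfrak m_A\twoheadrightarrow L\otimes\mathfrak m_B$ coming from small extensions of Artin local algebras, forcing every Maurer--Cartan functor to be unobstructed---a contradiction via the primary obstruction map. You instead produce a single explicit trivial fibration $g\colon\Li(V')\to\Li\langle t\rangle$ inside the category of DG-Lie algebras and show directly that it admits no section, using only a degree count. Your argument is more self-contained (no Artin rings, no obstruction theory) and stays entirely within the framework of Section~\ref{sec.modeldgla}; the paper's argument, on the other hand, explains conceptually \emph{why} such an object cannot be cofibrant by tying it to the obstructedness of deformation functors. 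One small point of presentation: your opening sentence identifying $\Hom(\Li\langle t\rangle,L)$ with $\MC(L)$ presupposes $dt=-\tfrac12[t,t]$, which you only fix later; it would read more cleanly to state the rescaling first.
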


\begin{proof}
	Consider first the case where the differential is trivial, then the initial map
	\[ 0 \to \Li \langle t\rangle\]
	is an elementary semifree extension: in fact $dt=0 \subset 0$.
	Let now $dt$ be different from zero, then by the above considerations we can set without loss of generality $dt+ \frac{1}{2}[t,t]=0$.
	
	Notice that for any DG-Lie algebra $M$ and any Maurer-Cartan element $x \in M^1$, one has a morphism of DG-Lie algebras  
	\[ f_x \colon \Li \langle t\rangle\to M, \quad t \mapsto x,\]
	because $df_x(t)= dx= -\frac{1}{2} [x,x]= -\frac{1}{2} [f_x(t),f_x(t)]= \frac{1}{2} f_x[t,t]= f_x(dt).$
	
	Let $0 \to V \to A \xrightarrow{\varphi} B \to 0$ be any small extension of Artin local
	$\K$-algebras with residue field $\K$,
	and consider the following commutative diagram, where $L$ is a DG-Lie algebra, $x$ is a Maurer-Cartan element of $L \otimes \mathfrak{m}_B$ and $f_x(t)=x$.
	\begin{center}
		\begin{tikzcd}
			0 \arrow[d] \arrow[r]                       & L \otimes \mathfrak{m}_A \arrow[d, "{\varphi}"] \\
			\Li \langle t\rangle\arrow[r, "f_x"'] \arrow[ru, "h", dotted] & L \otimes \mathfrak{m}_B  .                              
		\end{tikzcd}
	\end{center}
	The vertical map on the right is surjective, and hence a fibration in the model structure on the category of DG-Lie algebras. Notice that 
	$H^*(\Li \langle t\rangle)=0$, so that the vertical map on the left is a weak equivalence. If we suppose that $\Li\langle t \rangle $ is cofibrant, by the lifting axiom of Definition~\ref{def.mod} there exists a map $h$ making both triangles commute: hence there exists $y=h(t) \in L\otimes \mathfrak{m}_A$ such that \[dy= dh(t)= h(dt)= h \left(-\frac{1}{2}[t,t]\right)=-\frac{1}{2}[h(t),h(t)]= -\frac{1}{2}[y,y],\]
	and that ${\varphi}(y)= {\varphi}(h(t))= f_x(t)=x$, i.e., there exists a Maurer-Cartan element of $L\otimes \mathfrak{m}_A$  lifting $x$.
	This would of course imply that any Maurer-Cartan functor is unobstructed, which is absurd. This fact is clear in the context of deformation theory, however we give here an easy example of a DG-Lie algebra whose Maurer-Cartan functor is obstructed:  the DG-Lie algebra  $(\Li \langle t\rangle , d\equiv 0)$ generated by one indeterminate in degree 1 and with trivial differential. In fact, the primary obstruction map, which is equal to the induced bracket in cohomology
	\[ [-,-] \colon H^1(L) \times H^1(L) \to H^2(L),\]
	is not trivial, so that the Maurer-Cartan functor is obstructed (see e.g. \cite[Appendix B]{LMDT}).   

\end{proof}

\subsection{The cobar construction for a locally conilpotent cocommutative coalgebra}

A coassociative graded coalgebra is a pair $(C,\Delta)$ consisting of
a graded vector space $C$ and a morphism of graded vector spaces $\Delta \colon C \to  C \otimes C$,
called a coproduct, satisfying the coassociativity equation
\[(\Delta \otimes \Id_C )\Delta = (\Id_C \otimes \Delta)\Delta \colon  C \to C \otimes  C \otimes C.\]
The iterated coproducts 
$ {\Delta}^n \colon {C} \to {C}^{\otimes n+1}$
are defined recursively for $n \geq 0 $ by the formulas
\[ {\Delta}^0 = \Id, \quad {\Delta}^n \colon {C} \xrightarrow{{\Delta}} {C}\otimes {C} \xrightarrow{\Id \otimes {\Delta}^{n-1}} {C} \otimes {C}^{\otimes n} = {C}^{\otimes n+1}.\]
The kernels of ${\Delta}^n$ form an increasing filtration: $\ker {\Delta}^n \subset \ker {\Delta}^{n+1}$ for every $n \geq  0$, see e.g. \cite{Hin2} or \cite[Chapter 11]{LMDT}.

\begin{lemma}\label{lem.coprodotti}
	Let $(C,\Delta)$ be a graded coalgebra, then for every $n \geq 1$,
	\[ \Delta (\ker \Delta^n) \subseteq \ker \Delta^{n-1} \otimes \ker \Delta^{n-1}.\]
\end{lemma}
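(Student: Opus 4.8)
The plan is to prove the claim by induction on $n$, using the coassociativity equation to relate $\Delta^{n}$ on $C$ to lower iterated coproducts after applying $\Delta$ once. The key identity I would exploit is the recursive definition $\Delta^n = (\Id \otimes \Delta^{n-1})\Delta$, together with the symmetric reformulation $\Delta^n = (\Delta^{n-1}\otimes \Id)\Delta$, which follows from coassociativity by induction; having both forms available is what lets me control the two tensor factors of $\ker\Delta^{n-1}\otimes\ker\Delta^{n-1}$ independently.

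First I would establish the two ``one-sided'' formulas for the iterated coproduct, namely that for all $m\geq 0$ one has both
\[
\Delta^m = (\Id_C \otimes \Delta^{m-1})\Delta = (\Delta^{m-1}\otimes \Id_C)\Delta,
\]
which is a standard consequence of coassociativity proved by a short induction. Then, given $x \in \ker\Delta^n$, I would write $\Delta(x) = \sum_i a_i \otimes b_i$ (abusing notation for a general element of $C\otimes C$) and aim to show that each tensor factor lies in $\ker\Delta^{n-1}$. Concretely, applying $\Id \otimes \Delta^{n-1}$ to $\Delta(x)$ gives $\Delta^n(x) = 0$, which says $\sum_i a_i \otimes \Delta^{n-1}(b_i) = 0$; applying $\Delta^{n-1}\otimes \Id$ gives the mirror statement $\sum_i \Delta^{n-1}(a_i) \otimes b_i = 0$. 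The remaining task is to deduce from these two vanishing statements that $\Delta(x)$ actually lies in the subspace $\ker\Delta^{n-1}\otimes\ker\Delta^{n-1}$ of $C\otimes C$.

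The main obstacle is precisely this last deduction, since $\sum_i a_i\otimes\Delta^{n-1}(b_i)=0$ does not immediately force each $\Delta^{n-1}(b_i)=0$ when the $a_i$ are linearly dependent. To handle this cleanly I would choose the decomposition $\Delta(x)\in C\otimes C$ with respect to a basis: write $\Delta(x) = \sum_i a_i \otimes b_i$ where the $a_i$ are linearly independent. Then $\sum_i a_i\otimes\Delta^{n-1}(b_i)=0$ inside $C\otimes C^{\otimes n}$ forces $\Delta^{n-1}(b_i)=0$ for every $i$, i.e.\ each $b_i\in\ker\Delta^{n-1}$. Symmetrically, re-expanding with the $b_i$ chosen linearly independent and using the mirror identity shows the $a_i$ can be taken in $\ker\Delta^{n-1}$. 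A slightly more careful bookkeeping argument, organizing $\Delta(x)$ so that one set of tensor factors is a linearly independent family, yields both conditions simultaneously and hence places $\Delta(x)$ in $\ker\Delta^{n-1}\otimes\ker\Delta^{n-1}$, as required. I expect the linear-algebra step of passing from a vanishing tensor sum to membership in the prescribed tensor product of subspaces to be the only genuinely delicate point, the rest being formal manipulation of the coassociativity relations.
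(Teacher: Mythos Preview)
Your argument is correct. Note, however, that the paper does not actually prove this lemma: it simply cites \cite[Lemma~11.1.10]{LMDT}, so there is no in-paper proof to compare against. Your approach---using both coassociativity identities $\Delta^n=(\Id\otimes\Delta^{n-1})\Delta=(\Delta^{n-1}\otimes\Id)\Delta$ to place $\Delta(x)$ simultaneously in $C\otimes\ker\Delta^{n-1}$ and in $\ker\Delta^{n-1}\otimes C$, then intersecting---is the standard one.

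Two minor remarks. First, you announce ``induction on $n$'' but the argument you give is direct for each fixed $n$; the only induction actually needed is in verifying the left-sided formula $\Delta^n=(\Delta^{n-1}\otimes\Id)\Delta$. Second, the ``delicate'' linear-algebra step you flag is cleaner than your write-up suggests: over a field one has $\ker(\Id_C\otimes f)=C\otimes\ker f$ and $\ker(f\otimes\Id_C)=\ker f\otimes C$, and choosing a complement to $\ker f$ in $C$ shows immediately that $(C\otimes\ker f)\cap(\ker f\otimes C)=\ker f\otimes\ker f$. This replaces your two separate ``re-expand with the other factor linearly independent'' passes with a single clean statement.
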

See \cite[Lemma 11.1.10]{LMDT} for the proof. 

\begin{definition}
	 The graded coalgebra $(C, \Delta)$ is called (graded) cocommutative if
	$\tw \circ \Delta = \Delta$, where
	\[\tw \colon C \otimes  C \to C \otimes C, \quad 
	\tw(x \otimes y) = (-1)^{\overline{x} \  \overline{y}} y \otimes x,\]
	is the twist map.
\end{definition}

\begin{definition}
	The graded coalgebra $(C, \Delta)$ is called locally conilpotent if $C= \bigcup_n \ker \Delta^n$.
\end{definition}

This property is sometimes also called primitive cogeneration in the literature, see e.g. \cite[Chapter 22]{FHT}.

We briefly recall here the cobar construction for a cocommutative coalgebra. For more details we refer to \cite[Chapter 22]{FHT}, \cite[Appendix B]{QuiR} for the bounded case and to \cite{Hin2} for the unbounded case.
Let $(C,\Delta)$ be a locally conilpotent cocommutative graded coalgebra, and denote by $s \colon C \to C[-1]$ the suspension.  Consider the free graded algebra  $T(C[-1])$, which is a DG-algebra with differential defined as
\[ d(sx)= \sum (-1)^{\overline{x_i}} sx_i \otimes sx_i',\]
for $x \in C$ such that $\Delta(x)= \sum_i x_i \otimes x_i'$, and extended to the whole tensor algebra via the Leibniz rule. 
 By the cocommutativity of the coproduct $\Delta$, the above differential restricts to the free graded Lie algebra $\Li(C[-1])$; 
 the differential can be expressed as
\begin{equation}\label{eq.diffcobar}
	d(sx)= \frac{1}{2} \sum (-1)^{\overline{x_i}} [sx_i,sx_i'],
\end{equation}
for $x \in C$ such that $\Delta(x)= \sum_i x_i \otimes x_i'$.
 The cobar construction $\mathcal{L}(C)$ associated to $(C, \Delta)$ is the DG-Lie algebra  $\Li(C[-1])$ with the differential \eqref{eq.diffcobar}.

\begin{proposition}\label{prop.coalg}
	The cobar construction $\mathcal{L}(C)$ associated to the locally conilpotent cocommutative graded coalgebra $(C, \Delta)$ is a semifree DG-Lie algebra. 
\end{proposition}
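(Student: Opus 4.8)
The plan is to exhibit the initial map $0 \to \mathcal{L}(C)$ as a countable composition of elementary semifree extensions, using the conilpotency filtration $\{\ker\Delta^n\}$ as the filtration along which generators are adjoined. Recall that $\ker\Delta^0 = \ker\Id = 0$, that the kernels form an increasing filtration, and that local conilpotency gives $C = \bigcup_n \ker\Delta^n$. The crucial input is Lemma~\ref{lem.coprodotti}: if $x \in \ker\Delta^n$ then $\Delta(x) \in \ker\Delta^{n-1}\otimes\ker\Delta^{n-1}$, so writing $\Delta(x) = \sum_i x_i \otimes x_i'$ forces $x_i, x_i' \in \ker\Delta^{n-1}$. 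Feeding this into the cobar differential \eqref{eq.diffcobar} shows that $d(sx)$ is a sum of brackets $[sx_i, sx_i']$ with $sx_i, sx_i' \in s(\ker\Delta^{n-1})$, so that $d(s(\ker\Delta^n)) \subseteq \Li(s(\ker\Delta^{n-1}))$. This containment is the engine of the whole argument.

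First I would choose a splitting of the filtration: graded complements $U_n$ with $\ker\Delta^n = \ker\Delta^{n-1}\oplus U_n$ for $n \geq 1$, so that $C = \bigoplus_{n\geq1}U_n$ and $C[-1] = \bigoplus_{n\geq1}V_n$ with $V_n := s(U_n)$. Set $A_0 := 0$ and, as graded Lie algebras, $A_n := \Li(V_1\oplus\cdots\oplus V_n) = \Li(s(\ker\Delta^n))$. Since $\Li$ is left adjoint to the forgetful functor (Remark~\ref{rem.Fd=dF}) it preserves coproducts, which yields the identification $A_n \cong A_{n-1}\amalg\Li(V_n)$ of graded Lie algebras, with $A_{n-1}\hookrightarrow A_n$ the natural inclusion into the coproduct.

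Next I would verify that each $A_n$ is a DG-Lie subalgebra and that the inclusion is elementary semifree. The containment above says that the differential of every generator of $V_m$ (for $m\leq n$) lands in $A_{m-1}\subseteq A_n$; since $d$ is a derivation and $A_n$ is generated as a Lie algebra by these generators, it follows that $d(A_n)\subseteq A_n$, so $A_n$ is a sub-DG-Lie algebra of $\mathcal{L}(C)$. Moreover $d(V_n) \subseteq A_{n-1}$, which is exactly the condition for $A_{n-1}\hookrightarrow A_n = A_{n-1}\amalg\Li(V_n)$ to be an elementary semifree extension. Finally, since $C = \bigcup_n \ker\Delta^n$ and $\Li$ commutes with the filtered colimit, $\mathcal{L}(C) = \Li(C[-1]) = \colim_n A_n$; hence the initial map $0 = A_0 \to \mathcal{L}(C)$ is the countable composition of the elementary semifree extensions $A_{n-1}\to A_n$, i.e. a semifree extension.

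The only step requiring genuine care—the main obstacle—is checking that the cobar differential respects the conilpotency filtration, so that the $A_n$ really are a nested chain of sub-DG-Lie algebras. But this is precisely the content of Lemma~\ref{lem.coprodotti} combined with the explicit formula \eqref{eq.diffcobar}; once that compatibility is in place, the identification of each inclusion as elementary semifree and the passage to the colimit are formal, and local conilpotency is exactly what guarantees the filtration exhausts $C$.
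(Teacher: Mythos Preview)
Your proof is correct and follows essentially the same route as the paper: both use the conilpotency filtration $\{\ker\Delta^n\}$, invoke Lemma~\ref{lem.coprodotti} together with \eqref{eq.diffcobar} to see that $d(s(\ker\Delta^n))\subseteq \Li(s(\ker\Delta^{n-1}))$, split the filtration to write $C[-1]$ as a direct sum of graded pieces, and conclude that the resulting chain of free Lie algebras is a tower of elementary semifree extensions with colimit $\mathcal{L}(C)$. The only cosmetic difference is that the paper phrases the pieces as quotients $B^n=(\ker\Delta^n/\ker\Delta^{n-1})[-1]$ and records the slightly sharper fact that the induced $\bar d$ lands in $\Li(B^{n-1})_2$, whereas you work directly with chosen complements $V_n$ and the containment $d(V_n)\subseteq A_{n-1}$; neither refinement affects the argument.
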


\begin{proof}
	We show  that 
	the initial map $0 \to \mathcal{L}(C)$ is the countable composition of semifree extensions. Consider the increasing filtration of $C$ given by the kernels of the iterated coproduct:
	\[ 0= \ker \Delta^0 \subset \ker \Delta \subset \cdots \subset \ker \Delta^n \subset \ker \Delta^{n+1} \subset \cdots,\]
	and the associated quotients
	\[ A^n:= \frac{\ker \Delta^n}{\ker \Delta^{n-1} }.\] 
	Denoting by $B^n:= A^n[-1]$, then $C[-1]= \bigoplus_n B^n$, because by hypothesis $(C, \Delta)$ is locally conilpotent. 
	We prove that for every $n\geq 1$ the map
	\[  a_n \colon \coprod_{i=1}^{n-1}\Li (B^{i}) \to \coprod_{i=1}^n\Li(B^{i}),\]
	is an elementary semifree extension, i.e., that $d(B^n) \subseteq  \coprod_{i=1}^{n-1}\Li (B^{i})$.
	
We claim that
		for every $n \geq 1$, the differential $d$ descends to 
		\[ \overline{d} \colon B^n \to \Li (B^{n-1})_2 \subset B^{n-1}\otimes B^{n-1}.\]
	Fix $n \geq 1$; by Lemma~\ref{lem.coprodotti}, one has that
	${\Delta}\colon \ker {\Delta}^n \to  \ker {\Delta}^{n-1} \otimes \ker {\Delta}^{n-1}$, hence
		\[{\Delta}[-1]\colon \ker \Delta^n [-1] \to  \ker {\Delta}^{n-1} [-1]\otimes \ker {\Delta}^{n-1}[-1], \quad \forall n \geq 1.\]
			Since the differential on ${C}[-1]$ is defined  as $d= {\Delta}[-1]$, one obtains a map
			\[ \overline{d} \colon \frac{\ker {\Delta}^n}{\ker{\Delta}^{n-1}}[-1] \to  \frac{\ker {\Delta}^{n-1}}{\ker {\Delta}^{n-2}}[-1]  \otimes  \frac{\ker {\Delta}^{n-1}}{\ker {\Delta}^{n-2}}[-1]. \]
			By the cocommutativity of the coproduct ${\Delta}$, the image of $\overline{d}$ is contained inside $\Li (\frac{\ker {\Delta}^{n-1}}{\ker {\Delta}^{n-2}}[-1])_2= \Li (B^{n-1})_2$.

	The countable composition 
	\[ 0 \to \Li(B^1) \to \Li(B^1)\amalg \Li(B^2) \to \cdots \to \coprod_{i=1}^{n}\Li(B^{i}) \to \coprod_{i=1}^{n+1}\Li(B^{i}) \to \cdots \]
	is equal to the initial map $0 \to \mathcal{L}(C)$, because $\coprod_{i=1}^{n}\Li(B^{i}) \cong \Li (\bigoplus_{i=1}^n B^i)$ as graded Lie algebras. 
\end{proof}

\begin{remark}
	By the results of \cite{QuiR} in the bounded above case and \cite{Hin2} in the unbounded case, there exists a model structure on the category of differential graded cocommutative coalgebras. Moreover the cobar construction is a left Quillen functor, which means it preserves cofibrations and trivial cofibrations. Cofibrations in the category of DG-coalgebras are the injective maps, therefore in the setting of Proposition~\ref{prop.coalg} it is clear that $\mathcal{L}(C)$ is cofibrant, since all DG-coalgebras are cofibrant.
\end{remark}

\end{document}